\documentclass[a4paper,11pt,intlimits,oneside]{amsart}

\usepackage{amssymb,latexsym,amsmath,mathrsfs}
\usepackage{amsfonts,amsbsy,bm}
\usepackage{color}
\usepackage[margin=2cm]{geometry}

\usepackage{amsmath}
\usepackage{amssymb}
\usepackage{amsfonts}
\usepackage{amsthm,amscd}

\newtheorem{theorem}{Theorem}[section]
\newtheorem{proposition}[theorem]{Proposition}
\newtheorem{corollary}[theorem]{Corollary}

\theoremstyle{definition}
\newcommand{\comment}[1]{}

\numberwithin{equation}{section}

\theoremstyle{definition}

\newcommand{\Be}{\begin{equation}}
\newcommand{\Ee}{\end{equation}}
\newcommand{\Bea}{\begin{eqnarray}}
\newcommand{\Eea}{\end{eqnarray}}
\newcommand{\Bes}{\begin{equation*}}
\newcommand{\Ees}{\end{equation*}}
\newcommand{\Beas}{\begin{eqnarray*}}
\newcommand{\Eeas}{\end{eqnarray*}}
\newcommand{\Ba}{\begin{array}}
\newcommand{\Ea}{\end{array}}

\begin{document}
\title[Sharp weighted norm estimates for the Bergman operator]{Sharp weighted norm estimates for the positive Bergman operator: the two-parameter case}
\author{}
\author[Beno\^it F.\ Sehba]{Beno\^it Florent\ Sehba}
\address{Department of Mathematics, University of Ghana, P.O. Box LG 62, Legon, Accra, Ghana.}
\email{{\tt bfsehba@ug.edu.gh}}

\subjclass{Primary: 47B38, 30H20, 47B34, 42B25; Secondary: 42C40, 42A61}
\keywords{Bergman projection, B\'ekoll\'e--Bonami weight, extrapolation, upper half-plane}

\date{}

\begin{abstract}
We settle the two-parameter sharp weighted theory for the positive Bergman operator on the upper half-plane: we allow the exponent $\alpha$ that defines the operator to differ from the exponent $\gamma$ that defines the underlying weighted Lebesgue spaces. In this setting, we prove sharp weak-type and strong-type off-diagonal weighted inequalities, with explicit and best-possible dependence on the B\'ekoll\'e--Bonami characteristic of the weight. The key new tool is an off-diagonal extrapolation theorem, adapted to allow a change in the power of the distance to the boundary along the way. 
\end{abstract}

\maketitle

\section{Introduction}

A central theme in modern harmonic analysis is the quantification of weighted operator norms in terms of the characteristic of the weight. For the Hilbert transform and general Calder\'on--Zygmund operators, this question was resolved by Hyt\"onen's celebrated $A_2$ theorem \cite{Hytonen} (see also \cite{Cruzetal,DGPP,HyLaPerez,HyPerez,Laceyetal,Laceyetal2,Lerner,Petermichl,PetVol}), and it has since become a template for a wide range of singular and non-singular operators. The Bergman projection is a natural test case in this context: it is fundamental in complex and harmonic analysis, admits a genuine weighted theory going back to B\'ekoll\'e and Bonami, and yet its sharp quantitative behaviour was understood only recently. Our purpose here is to push this sharp theory into a previously unexplored setting in which the projection and the weighted space on which it acts are governed by distinct exponents. This extra flexibility, as we explain below, is precisely what is needed to complete the off-diagonal theory, and supplying it is the main objective of this note.

Let $\mathbb{R}_+^2:=\{z\in\mathbb{C}: \Im m\, z>0\}$ denote the upper half-plane. For $1\le p<\infty$ and $\alpha>-1$, write $dV_\alpha(x+iy)=y^\alpha\,dx\,dy$, and let $L^p(\omega\, dV_\alpha)$ denote the space of functions $f$ on $\mathbb{R}_+^2$ with
$$\|f\|_{p,\omega,\alpha}^p:=\int_{\mathbb{R}_+^2}|f(z)|^p\omega(z)\,dV_\alpha(z)<\infty.$$
When $\omega\equiv 1$ we write $L^p(dV_\alpha)$. The Bergman projection $P_\alpha$ is the orthogonal projection of $L^2(dV_\alpha)$ onto its closed subspace of holomorphic functions,
$$P_{\alpha}f(z):=c_\alpha\int_{\mathbb{R}_+^2}\frac{f(w)}{(z-\overline{w})^{2+\alpha}}\,dV_\alpha(w),$$
and its positive counterpart is
$$P_{\alpha}^+f(z):=\int_{\mathbb{R}_+^2}\frac{f(w)}{|z-\overline{w}|^{2+\alpha}}\,dV_\alpha(w).$$
Both operators are bounded on $L^p(dV_\alpha)$ for every $1<p<\infty$. The subtle question, however, is for which weights $\omega$ they remain bounded on $L^p(\omega\,dV_\alpha)$, and with what precise dependence of the operator norm on the weight.

The qualitative answer was given by B\'ekoll\'e and Bonami. For $\alpha>-1$ and $1<p<\infty$, define $p'$ by $1/p+1/p'=1$. For an interval $I\subset\mathbb{R}$, let $Q_I:=\{z=x+iy: x\in I,\ 0<y<|I|\}$ be the associated Carleson box, and put $|Q_I|_\alpha=V_\alpha(Q_I)=\int_{Q_I}dV_\alpha$. The B\'ekoll\'e--Bonami class $B_{p,\alpha}$ consists of weights $\omega$ for which
$$[\omega]_{B_{p,\alpha}}:=\sup_{I}\left(\frac{1}{|Q_I|_\alpha}\int_{Q_I}\omega\,dV_\alpha\right)\left(\frac{1}{|Q_I|_\alpha}\int_{Q_I}\omega^{1-p'}\,dV_\alpha\right)^{p-1}<\infty,$$
with the obvious endpoint modification for $B_{1,\alpha}$. By \cite{Bek,BB}, for $1<p<\infty$, $P_\alpha^+$ is bounded on $L^p(\omega\,dV_\alpha)$ exactly when $\omega\in B_{p,\alpha}$, and it maps $L^1(\omega\,dV_\alpha)$ into weak-$L^1(\omega\,dV_\alpha)$ exactly when $\omega\in B_{1,\alpha}$. Once this qualitative boundedness is known, Hyt\"onen's work \cite{Hytonen} made it natural to seek the sharp quantitative version. Pott and Reguera answered this in \cite{PR}: for $1<p<\infty$,
\Be\label{eq:PottReg}
\|P_\alpha^+\|_{L^p(\omega\, dV_\alpha)\rightarrow L^p(\omega\, dV_\alpha)}
\le c[\omega]_{B_{p,\alpha}}^{\max\{1,\frac{p'}{p}\}},
\Ee
and the exponent is best possible. This diagonal ($p\to p$, single parameter $\alpha$) result is the starting point of the entire sharp theory for the Bergman projection.

Diagonal estimates such as \eqref{eq:PottReg} are only half the story. Off-diagonal estimates, i.e. bounding an operator from $L^p$ into a different $L^q$, with $q>p$, are the natural framework for smoothing operators. To access this off-diagonal world for the Bergman projection, the fractional Bergman operator was introduced in \cite{Sehba4}: for $\alpha>-1$ and $0\le \gamma<2+\alpha$,
\Be\label{eq:fracBergdef}
T_{\alpha,\gamma}f(z):=\int_{\mathbb{R}_+^2}\frac{f(w)}{|z-\overline{w}|^{2+\alpha-\gamma}}\,dV_\alpha(w), \qquad T_{\alpha,0}=P_\alpha^+,
\Ee
which is the half-plane analogue of the classical fractional integral. Its natural weight classes are the two-weight B\'ekoll\'e--Bonami classes $B_{p,q,\alpha}$: for $1<p,q<\infty$,
$$[\omega]_{B_{p,q,\alpha}}:=\sup_{I}\left(\frac{1}{|Q_I|_{\alpha}}\int_{Q_I}\omega^q\,dV_{\alpha}\right)
\left(\frac{1}{|Q_I|_{\alpha}}\int_{Q_I}\omega^{-p'}\,dV_{\alpha}\right)^{q/p'},$$
with the corresponding endpoint class $B_{1,q,\alpha}$. It was shown in \cite{Sehba5} that, with $\frac{1}{q}=\frac{1}{p}-\frac{\gamma}{2+\alpha}$, membership in $B_{p,q,\alpha}$ characterizes boundedness of $T_{\alpha,\gamma}:L^p(\omega^p\,dV_\alpha)\to L^q(\omega^q\,dV_\alpha)$, and the sharp exponent was subsequently found in \cite{Sehba6}: for $1<p<(2+\alpha)/\gamma$,
\Be\label{eq:Sehbasharp1}
\|T_{\alpha,\gamma}\|_{L^p(\omega^p\,dV_\alpha)\rightarrow L^q(\omega^q\,dV_\alpha)}
\le c[\omega]_{B_{p,q,\alpha}}^{\left(1-\frac{\gamma}{2+\alpha}\right)\max\{1,\frac{p'}{q}\}}.
\Ee

The fractional-operator route, however, forces the exponent $\alpha$ defining the operator to coincide with the exponent governing the ambient weighted spaces. This coupling becomes a genuine obstruction when one tries to turn \eqref{eq:Sehbasharp1} into a genuine off-diagonal statement for $P_\alpha^+$ itself. Indeed, using the pointwise bound $y^\gamma|P_\alpha^+f(z)|\le |T_{\alpha,\gamma}f(z)|$, and choosing $\gamma=\frac{\beta-\alpha}{q}$ turns \eqref{eq:Sehbasharp1} into a sharp off-diagonal strong-type bound for $P_\alpha^+$ between weighted spaces of different exponents $\alpha$ and $\beta$,
\Be\label{eq:strongberg}\| {P}_{\alpha}^+\|_{L^p(\omega^p\,\mathrm{d}V_{\alpha})\longrightarrow L^q(\omega^q\,\mathrm{d}V_{\beta})}\lesssim [\omega]_{B_{p,q,\alpha}}^{\left(\frac{1}{p'}+\frac{1}{q}\right)\max\{1,\frac{p'}{q}\}}, \qquad \beta=(2+\alpha)\Big(\frac{q}{p}-1\Big) + \alpha.\Ee
But the weak-type analogue of \eqref{eq:Sehbasharp1} from \cite{Sehba6},
\begin{equation}\label{eq:sharpweak}\| T_{\alpha,\gamma}f\|_{L^{q,\infty}(\omega^{q}\,\mathrm{d}V_{\alpha})}\le c[\omega]_{B_{p,q,\alpha}}^{1-\frac{\gamma}{2+\alpha}}\|\omega f\|_{p,\alpha},\end{equation}
degenerates at $\gamma=0$ to nothing more than the classical weak-type bound
\begin{equation}\label{eq:sharpweakberg}\| P_{\alpha}^+f\|_{L^{p,\infty}(\omega^p\,\mathrm{d}V_{\alpha})}\le c[\omega^p]_{B_{p,\alpha}}\|f\|_{p,\omega^p,\alpha}\end{equation}
 Thus, while the strong-type off-diagonal theory for $P_\alpha^+$ was accessible through $T_{\alpha,\gamma}$, its natural weak-type companion was not. This asymmetry is the starting point of the present note, and it points to something more structural: the fractional operator route forces the exponent $\alpha$ of the operator to coincide with the exponent $\gamma$ of the ambient spaces, and this coupling is precisely what has to be broken.

In this note we remove this coupling entirely. We allow the exponent $\alpha$ (defining the operator) and the exponent $\gamma$ (defining the domain space) to be genuinely distinct, with $\alpha\ge\gamma$, and we settle both the weak-type and strong-type off-diagonal sharp theory for $P_\alpha^+$ in this two-parameter setting. Our main results are as follows.

\begin{theorem}\label{thm:sharpfracbergnew}
Let $\alpha\geq \gamma>-1$ and $1\le p\le q<\infty$. Define $\beta=(2+\gamma)(q/p-1) + \gamma$, and assume $\omega\in B_{p,q,\gamma}$. Then
\begin{equation}\label{eq:sharpfracbergnew}
\| P_{\alpha}^+\|_{L^p(\omega^p\,dV_{\gamma})\longrightarrow L^{q,\infty}(\omega^q\,dV_{\beta})}
\lesssim [\omega]_{B_{p,q,\gamma}}^{1/p'+1/q}.
\end{equation}
Moreover, this estimate is sharp.
\end{theorem}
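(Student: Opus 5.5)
The plan is to reduce first to a dyadic model operator, then remove the parameter $\alpha$ in favour of $\gamma$, and finally prove a weak-type off-diagonal bound for the dyadic model by a duality/testing argument.

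\textbf{Dyadic model.} Fix the two standard dyadic grids $\mathcal D^1,\mathcal D^2$ on $\mathbb R$. For $I$ dyadic, let $T_I$ be the upper half of $Q_I$. For $f\ge 0$ one has the pointwise bound
$$P_\alpha^+f(z)\lesssim \sum_{j=1,2}\sum_{I\in\mathcal D^j}\mathbf 1_{Q_I}(z)\,\frac{1}{|Q_I|_\alpha}\int_{Q_I}f\,dV_\alpha=:\sum_j A^j_\alpha f(z).$$
This holds because $|z-\overline w|^{2+\alpha}\approx |I|^{2+\alpha}\approx|Q_I|_\alpha$ for the smallest $I$ with $z,w\in Q_I$, and a geometric series over ancestors recovers the kernel.

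\textbf{Reduction from $\alpha$ to $\gamma$.} Since $\alpha\ge\gamma$ and $y\le |I|$ on $Q_I$, we have $y^\alpha\le |I|^{\alpha-\gamma}y^\gamma$, while $|Q_I|_\alpha\approx |I|^{\alpha-\gamma}|Q_I|_\gamma$. Hence
$$\langle f\rangle^{\alpha}_{Q_I}:=\frac{1}{|Q_I|_\alpha}\int_{Q_I}f\,dV_\alpha\lesssim\frac{1}{|Q_I|_\gamma}\int_{Q_I}f\,dV_\gamma,$$
so $A_\alpha^j f\lesssim A_\gamma^j f$. It therefore suffices to prove \eqref{eq:sharpfracbergnew} for $A_\gamma:=A^j_\gamma$ with $\alpha=\gamma$. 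This is the only place where $\alpha\ge\gamma$ is used.

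\textbf{Weak-type bound for $A_\gamma$ by duality.} Put $u=\omega^q y^\beta$ and $\sigma=\omega^{-p'}y^\gamma$, and write $f=g\,\omega^{-p'}$ in the usual way. The weak norm is controlled through
$$\|A_\gamma f\|_{L^{q,\infty}(u)}\approx\sup_E u(E)^{1/q-1}\int_E A_\gamma f\,u,$$
valid for $q>1$; for $q=1$ one argues directly on the level set. For a fixed set $E$,
$$\int_E A_\gamma f\,u=\sum_I\langle f\rangle^\gamma_{Q_I}\,u(E\cap Q_I).$$

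On each box the plan is to apply Hölder, $\int_{Q_I}f\,dV_\gamma\le \|f\omega\|_{L^p(Q_I,dV_\gamma)}\,\sigma(Q_I)^{1/p'}$. The relation $\beta=(2+\gamma)(q/p-1)+\gamma$ is exactly what makes the $B_{p,q,\gamma}$ characteristic appear homogeneously. Since $y^{\beta-\gamma}\le|I|^{\beta-\gamma}$ on $Q_I$, we have $u(Q_I)\lesssim |I|^{\beta-\gamma}\int_{Q_I}\omega^q\,dV_\gamma$, and a power count gives the factor $[\omega]_{B_{p,q,\gamma}}^{1/q}$ coming from $u$. The remaining factor $[\omega]^{1/p'}$ should come from $\sigma(Q_I)^{1/p'}$ relative to $|Q_I|_\gamma$.

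To sum over $I$, I would split the boxes into "principal" stopping boxes. A box is principal when $u(E\cap Q_I)/u(Q_I)$ or the $\sigma$-weighted average of $f$ jumps by a factor of $2$. Along each chain, the sizes of the $Q_I$ are geometric, so the nonprincipal contributions sum geometrically. The principal part is then bounded by a Carleson embedding in $L^p(\sigma)$, combined with the bound $\sum\mathbf 1_{Q_I}\le$ a single chain inside $E$. The outcome should be $[\omega]^{1/p'+1/q}\,\|\omega f\|_{p,\gamma}\,u(E)^{1-1/q}$. Crucially, the weak-type structure must let us avoid the extra power $\max\{1,p'/q\}$ appearing in \eqref{eq:strongberg}.

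\textbf{Main obstacle.} The hardest step is this summation: obtaining exactly the exponent $1/p'+1/q$ with no loss. The pointwise shortcut $y^{(\beta-\gamma)/q}P_\gamma^+f\le T_{\gamma,(\beta-\gamma)/q}f$ combined with \eqref{eq:sharpweak} gives the right exponent numerically. However, it does not transfer weak norms between $dV_\gamma$ and $dV_\beta$, since the level sets change. If the direct stopping argument is lossy, I would instead prove the endpoint case $p=1$, $q=1$ on the dyadic model, where the level set of $A_\gamma f$ is a union of maximal boxes. I would then extrapolate off-diagonally while changing the power of $y$, which is the tool announced in the abstract.

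\textbf{Sharpness.} I would test on $\omega(z)=|z|^{a}$ with $a$ tending to the critical exponent where $[\omega]_{B_{p,q,\gamma}}\to\infty$, and take $f=\omega^{-p'}\mathbf 1_{Q_{[0,1]}}$. Computing $P_\alpha^+f$ from below on $\{|z|>1\}$ and the $L^{q,\infty}(u)$ norm, one should find growth exactly of order $[\omega]^{1/p'+1/q}$.
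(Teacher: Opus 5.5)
\textbf{Upper bound.} Your reduction from $\alpha$ to $\gamma$ is correct, and it even holds at the level of kernels. Since $y_w<|z-\bar w|$, one has $y_w^{\alpha}|z-\bar w|^{-2-\alpha}\le y_w^{\gamma}|z-\bar w|^{-2-\gamma}$, so $P_\alpha^+|f|\le P_\gamma^+|f|$. This shows the bound for $\alpha>\gamma$ follows from the case $\alpha=\gamma$, which is a cleaner use of $\alpha\ge\gamma$ than the paper's. It also settles $p=q$ directly through \eqref{eq:sharpweakberg}, a case where the paper's endpoint degenerates to $q_0=1$.

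For $p<q$, however, the heart of the proof is missing. The weak-type bound for $A_\gamma$ with exactly $[\omega]_{B_{p,q,\gamma}}^{1/p'+1/q}$ is only described by what the stopping-time summation ``should'' give. Your fallback cannot work either. The off-diagonal extrapolation of Theorem \ref{thm:sharpextra} and Corollary \ref{cor:weaksharpextra} preserves $\frac1p-\frac1q$, and the change in the power of $y$ is tied to the balance $\frac{2+\beta}{q}=\frac{2+\gamma}{p}$. So from $(1,1)$ you only reach $p=q$.

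The missing idea is the endpoint $(p_0,q_0)=(1,q_0)$ with $\frac1{q_0}=\frac1{p'}+\frac1q$, mapping into $L^{q_0,\infty}(\omega^{q_0}dV_{\beta_0})$ with $2+\beta_0=(2+\gamma)q_0$. The paper gets it from Minkowski's inequality in the normable space $L^{q_0,\infty}$ ($q_0>1$), together with the kernel estimate $\||\cdot-\bar z|^{-2-\alpha}\|_{L^{q_0,\infty}(u\,dV_{\beta_0})}\lesssim(\operatorname{Im} z)^{\gamma-\alpha}(\mathcal M_\gamma u(z))^{1/q_0}$. That estimate follows by covering $\{|w-\bar z|<\lambda\}$ by a Carleson box and using $\lambda>\operatorname{Im} z$ and $\alpha\ge\gamma$. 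Taking $u=\omega^{q_0}$ gives the constant $[\omega]_{B_{1,q_0,\gamma}}^{1/q_0}$. Since $p_0=1$, the extrapolated exponent is $\theta=\frac1{q_0}=\frac1{p'}+\frac1q$, with no $\max$-loss. You correctly spotted that weak norms do not survive multiplication by a power of $y$. The paper bypasses this by extrapolating the strong norms of $\lambda\chi_{\{|Tf|>\lambda\}}$ for each fixed $\lambda$, where the change of measure is exact.

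\textbf{Sharpness.} Your test does not produce $[\omega]^{1/p'+1/q}$. Take $\sigma=\omega^{-p'}=|z|^{-(2+\gamma)+\epsilon}$. Then $[\omega]_{B_{p,q,\gamma}}\approx\epsilon^{-q/p'}$ and $\|\omega f\|_{p,\gamma}\approx\epsilon^{-1/p}$. On $\{|z|>2\}$ one has $P_\alpha^+f(z)\approx|z|^{-2-\alpha}\int_{Q_{[0,1]}}\sigma\,dV_\alpha$, and $\int_{Q_{[0,1]}}\sigma\,dV_\alpha\approx(\alpha-\gamma+\epsilon)^{-1}$.
\begin{itemize}
\item For $\alpha=\gamma$ the weak norm there is $\approx\epsilon^{-1}$, so the ratio is only $\epsilon^{-1/p'}\approx[\omega]^{1/q}$. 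This is the same effect by which the dual power-weight example for the Hilbert transform sees only $[w]_{A_p}^{1/p}$.
\item For $\alpha>\gamma$ the ratio tends to $0$.
\item If instead $\omega^q$ is the near-critical weight, both $\|\omega f\|_{p,\gamma}$ and the far-field weak norm stay bounded.
\end{itemize}

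In weak-type problems the full exponent is typically detected by a near-critical \emph{target} weight, tested near its singularity. This is the structure of the paper's argument: reduce to $u=\omega^q\in B_{1,\gamma}$, then bound $P_\alpha^+(u^{1/q_0'}\mathbf 1_{\mathbb B})\gtrsim 1/\delta$ on a small ball about $0$.

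Be aware, though, that for $p<q$ the factor $y^{\beta-\gamma}$ in $\omega^q\,dV_\beta$ damps a boundary singularity of $\omega^q$. For example, $\omega^q=|z|^{-(2+\gamma)+\epsilon}$ gives $\omega^q\,dV_\beta(B(0,r))\approx r^{\beta-\gamma+\epsilon}$ uniformly in $\epsilon$. Correspondingly, the paper's own family requires $\delta>\frac{\beta-\gamma}{1+\beta}$, so its characteristic stays bounded when $\beta>\gamma$. The genuinely off-diagonal lower bound is therefore the delicate point, and your sketch does not supply it.
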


\begin{theorem}\label{thm:sharpweakbergnew}
Let $\alpha\geq \gamma>-1$ and $1<p\le q<\infty$. Assume $\omega\in B_{p,q,\gamma}$, and put $\beta=(2+\alpha)(q/p-1) + \alpha$. Then
\Be\label{eq:sharpbergnew}
\| {P}_{\alpha}^+\|_{L^p(\omega^p\,dV_{\gamma})\longrightarrow L^q(\omega^q\,dV_{\beta})}
\lesssim [\omega]_{B_{p,q,\gamma}}^{(1/p'+1/q)\max\{1,p'/q\}}.
\Ee
Moreover, this estimate is sharp.
\end{theorem}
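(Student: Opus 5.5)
We prove Theorem~\ref{thm:sharpweakbergnew}; the plan has three steps.

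\emph{Step 1: reduce to $\alpha=\gamma$.} For $z=x+iy$ and $w=u+iv$ we have $|z-\overline{w}|\ge y+v$. Hence
$$\frac{v^{\alpha}}{|z-\overline{w}|^{2+\alpha}}=\frac{v^{\gamma}}{|z-\overline{w}|^{2+\gamma}}\Big(\frac{v}{|z-\overline{w}|}\Big)^{\alpha-\gamma}\le \frac{v^{\gamma}}{|z-\overline{w}|^{2+\gamma}}\Big(\frac{v}{y+v}\Big)^{\alpha-\gamma}.$$
This gives the pointwise bound $P_\alpha^+|f|\le P_\gamma^+|f|$. It does not yet produce the change of target measure from $dV_{\beta_\gamma}$ to $dV_{\beta_\alpha}$, where $\beta_\alpha=(2+\alpha)(q/p-1)+\alpha$. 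Writing $\delta=q/p-1\ge 0$, we have $\beta_\alpha-\beta_\gamma=(\alpha-\gamma)(1+\delta)=(\alpha-\gamma)q/p$. So we need to gain a factor $y^{(\alpha-\gamma)/p}$ on $P_\alpha^+ f$. Set $\gamma'=(\beta_\alpha-\alpha)/q=(2+\alpha)(1/p-1/q)$. Then
$$y^{(\beta_\alpha-\alpha)/q}P_\alpha^+f\le T_{\alpha,\gamma'}f,$$
which is the route behind \eqref{eq:strongberg}. The issue is that the weight class is $B_{p,q,\gamma}$ rather than $B_{p,q,\alpha}$.

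\emph{Step 2: the off-diagonal extrapolation theorem.} This is the key new tool announced in the abstract. First I would establish a base diagonal-type estimate that mixes the two exponents. Specifically, for $\omega^p\in B_{p_0,\gamma}$ I want
$$\|P_\alpha^+ f\|_{L^{p_0}(\omega\, dV_\alpha)}\lesssim[\omega]^{\max\{1,p_0'/p_0\}}\|f\|_{L^{p_0}(\omega\, dV_\gamma)}$$
in the appropriate normalization. To prove it, I would run the Pott--Reguera dyadic domination of $P_\alpha^+$ by the sparse-type operator
$$\sum_{I\in\mathcal D}\frac{\mathbf 1_{Q_I}}{|Q_I|_\alpha}\int_{Q_I}|f|\,dV_\alpha,$$
which holds over finitely many dyadic grids. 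I would then control
$$\frac{1}{|Q_I|_\alpha}\int_{Q_I}|f|v^{\alpha-\gamma}\,dV_\gamma\le |I|^{\alpha-\gamma}\frac{|Q_I|_\gamma}{|Q_I|_\alpha}\cdot\frac{1}{|Q_I|_\gamma}\int_{Q_I}|f|\,dV_\gamma.$$
Since $|Q_I|_\alpha\simeq|I|^{2+\alpha}$, the prefactor $|I|^{\alpha-\gamma}|Q_I|_\gamma/|Q_I|_\alpha$ is $\simeq 1$. Thus the $\alpha$-averages are dominated by $\gamma$-averages, while the indicator $\mathbf 1_{Q_I}$ is measured in $dV_\beta$; this is what makes the power of $y$ change. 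Next I would extrapolate in the Duoandikoetxea / Lacey--Moen--P\'erez--Torres off-diagonal form, tracking the exponent $(1/p'+1/q)\max\{1,p'/q\}$. The extrapolation is carried out with a Rubio de Francia algorithm built on the maximal operator over Carleson boxes $M_\gamma$, whose $L^s(\omega\,dV_\gamma)$ norm is $\lesssim[\omega]_{B_{s,\gamma}}^{1/(s-1)}$. The power of $y$ is absorbed into the weight at each application.

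\emph{Step 3: sharpness.} I would test power weights $\omega(z)=|z|^{a}$, or $y^a$ near the critical exponent, with $f=\mathbf 1_{Q}\,y^{b}$ on a unit box. Letting $a$ tend to the endpoint, $[\omega]_{B_{p,q,\gamma}}\sim(\text{endpoint}-a)^{-1}$, and both sides can be computed explicitly. I would treat the two cases $p'\le q$ and $p'>q$ by duality, following the examples in \cite{PR,Sehba6}.

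The main obstacle is Step 2: keeping the characteristic's exponent sharp while the measure changes from $dV_\gamma$ to $dV_\beta$. I would first try to verify directly, via the sparse form with Carleson-box averages, that the $q$-side testing constants scale exactly like $[\omega]_{B_{p,q,\gamma}}^{1/p'+1/q}$.
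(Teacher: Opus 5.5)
There is a genuine gap, and it sits exactly where you placed the main obstacle. The factor $y^{(\alpha-\gamma)/p}$ that Step 1 shows you must gain cannot be gained by any argument, because for $\alpha>\gamma$ the inequality with target $dV_{\beta_\alpha}$, $\beta_\alpha=(2+\alpha)(q/p-1)+\alpha$, is false.

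Take $\omega\equiv 1$, so that $[\omega]_{B_{p,q,\gamma}}=1$. The substitution $\xi=tw$ shows that $P_\alpha^+$ commutes with the dilations $D_tf(w)=f(tw)$, $t>0$. On the other hand, $\|D_tg\|_{L^r(dV_\eta)}=t^{-(2+\eta)/r}\|g\|_{L^r(dV_\eta)}$. Hence, for $f=\mathbf 1_{Q_I}$ (so that $P_\alpha^+f>0$ everywhere),
\[
\frac{\|P_\alpha^+(D_tf)\|_{L^q(dV_{\beta_\alpha})}}{\|D_tf\|_{L^p(dV_\gamma)}}
=t^{\frac{2+\gamma}{p}-\frac{2+\beta_\alpha}{q}}\,
\frac{\|P_\alpha^+f\|_{L^q(dV_{\beta_\alpha})}}{\|f\|_{L^p(dV_\gamma)}}
=t^{-\frac{\alpha-\gamma}{p}}\,
\frac{\|P_\alpha^+f\|_{L^q(dV_{\beta_\alpha})}}{\|f\|_{L^p(dV_\gamma)}},
\]
which is unbounded as $t\to0$. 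With $p=q=p_0$, the same computation refutes the mixed base estimate $L^{p_0}(\omega\,dV_\gamma)\to L^{p_0}(\omega\,dV_\alpha)$ on which your Step 2 rests.

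Your sparse computation cannot rescue it. Bounding $dV_\alpha$-averages over $Q_I$ by $dV_\gamma$-averages is just the domination of Step 1 again. Measuring $\mathbf 1_{Q_I}$ in $dV_{\beta_\alpha}$ rather than $dV_{\beta_\gamma}$ multiplies the $I$-th term by $(|Q_I|_{\beta_\alpha}/|Q_I|_{\beta_\gamma})^{1/q}\approx|I|^{(\alpha-\gamma)/p}$, which is unbounded over large boxes. No extrapolation scheme can absorb a change of homogeneity.

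What is true is the $\gamma$-balanced version, with target $L^q(\omega^q\,dV_{\beta_\gamma})$, $\beta_\gamma=(2+\gamma)(q/p-1)+\gamma$. Your Step 1 already proves it with no new tool. Combine $P_\alpha^+|f|\le P_\gamma^+|f|$ with $y^{\gamma'}P_\gamma^+|f|\le T_{\gamma,\gamma'}|f|$, where $\gamma'=(2+\gamma)(1/p-1/q)=(\beta_\gamma-\gamma)/q$, and apply \eqref{eq:Sehbasharp1} at parameter $\gamma$. This gives the bound $[\omega]_{B_{p,q,\gamma}}^{(1/p'+1/q)\max\{1,p'/q\}}$, i.e. \eqref{eq:strongberg} with $\alpha$ replaced by $\gamma$.

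The paper proceeds differently. It combines the weak-type bound of Theorem \ref{thm:sharpfracbergnew} with the Sawyer-type equivalence \eqref{eq:strongfromweak} (strong norm $\approx$ weak norm $+$ dual weak norm) and the identity $[\omega^{-1}]_{B_{q',p',\gamma}}=[\omega]_{B_{p,q,\gamma}}^{p'/q}$. But Theorem \ref{thm:sharpfracbergnew} is stated with $\beta=(2+\gamma)(q/p-1)+\gamma$. So that argument likewise only reaches $dV_{\beta_\gamma}$, tacitly identifying the two values of $\beta$; it does not establish the statement as written when $\alpha>\gamma$.

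Your Step 3 is only a sketch. For $\alpha>\gamma$, sharpness cannot be inherited from $P_\gamma^+$, because the domination $P_\alpha^+\le P_\gamma^+$ goes the wrong way for lower bounds. You must test $P_\alpha^+$ itself, as the paper does in its weak-type example with power weights in $|z|$ and $f=\mathbf 1_{\mathbb{B}}$, using $|z-\overline{w}|\le 2|w|$ for $|w|\ge|z|$. That example gives the lower bound when $p'\le q$, since the strong norm dominates the weak one; when $p'>q$, pass to the dual.
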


Several features of these results deserve emphasis. First, the exponent governing the growth of the operator norm is completely explicit and independent of $\alpha$; the role of $\alpha$ is confined to the implicit constant. Second, the weight characteristic that controls everything, $[\omega]_{B_{p,q,\gamma}}$, is built entirely from the domain parameter $\gamma$: the operator's own parameter $\alpha$ never appears inside the weight class. This is precisely the extra freedom that the previous diagonal and fractional-operator approaches could not deliver. Third, setting $\alpha=\gamma$ in Theorem \ref{thm:sharpweakbergnew} recovers the known one-parameter estimate \eqref{eq:strongberg}, so our results strictly generalize the earlier ones.

The engine behind both theorems is a new extrapolation principle. Rubio de Francia extrapolation, and its sharp quantitative descendants developed for the Bergman setting in \cite{Laceyetal,Sehba6}, allow one to propagate a sharp weighted estimate from a single pair of exponents $(p_0,q_0)$ to the entire admissible scale, while keeping track of the dependence on the weight characteristic. What was missing from the existing statement is the ability to let the power of the distance to the boundary move together with the exponents: earlier extrapolation kept this power frozen at $\alpha$. We remove that restriction 
by first renormalising the operator with a suitable power of $y=\Im z$ to reduce to the fixed-parameter statement, applying extrapolation, and then undoing the renormalisation. This is precisely what allows the exponents $\beta$ and $\gamma$ in Theorems \ref{thm:sharpfracbergnew} and \ref{thm:sharpweakbergnew} to be genuinely decoupled from $\alpha$. For the sharpness half of Theorem \ref{thm:sharpfracbergnew}, extrapolation alone is not enough; we exhibit an explicit family of testing weights and functions that simultaneously saturates the inequality, balancing all parameters $p,q,\alpha,\beta,\gamma$ against one another.

The presentation follows the general strategy of \cite{Sehba6}, adapted throughout to accommodate the extra parameter; the paper is otherwise self-contained modulo the Sawyer-type testing characterizations of $T_{\alpha,\gamma}$ established there, which we recall as needed.

\section{Proofs of the results}
\subsection{Proof of Theorem \ref{thm:sharpfracbergnew}}
We first record an adapted sharp extrapolation result, needed for the proof of our main results. It follows from a suitable reinterpretation of \cite[Theorem 6]{Sehba6}, which we recall below.
\begin{theorem}[{\cite[Theorem 6]{Sehba6}}]\label{thm:Sehba6}
Let $T$ be an operator defined on a suitable class (e.g. $C_c^{\infty}$ or $\cup_p L^p(\omega^p\,\mathrm{d}V_{\alpha})$, $\alpha>-1$). Suppose $p_0,q_0$ are exponents with $1\le p_0\le q_0<\infty$ such that
$$\|\omega Tf\|_{q_0,\alpha}\le c[\omega]^{\theta}_{B_{p_0,q_0,\alpha}}\|\omega f\|_{p_0,\alpha}$$
holds for all $\omega\in B_{p_0,q_0,\alpha}$ and some $\theta>0$. Then
$$\|\omega Tf\|_{q,\alpha}\le c[\omega]^{\theta\max\{1,\frac{q_0}{p'_0}\cdot\frac{p'}{q}\}}_{B_{p,q,\alpha}}\|\omega f\|_{p,\alpha}$$
for all $p,q$ satisfying $1<p\le q<\infty$ and $\frac{1}{p} - \frac{1}{q} = \frac{1}{p_0} - \frac{1}{q_0}$, and for all $\omega\in B_{p,q,\alpha}$.
\end{theorem}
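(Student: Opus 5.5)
The plan is to follow the Duoandikoetxea approach to off-diagonal extrapolation (sharp version of Lacey--Moen--P\'erez--Torres), with Muckenhoupt cubes replaced by Carleson boxes $Q_I$ and the measure $dV_\alpha$.

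\emph{Tools.} First I would translate the hypotheses into one-weight language. Put $s=1+q/p'$ and $s_0=1+q_0/p_0'$. Directly from the definitions, $\omega\in B_{p,q,\alpha}$ if and only if $\omega^q\in B_{s,\alpha}$, with equal characteristics. The relation $\frac1p-\frac1q=\frac1{p_0}-\frac1{q_0}$ means the same ``gap'' is used at both ends, which is what lets one weight class be turned into the other.

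The main tool is the Bergman maximal function
$$M_\alpha f(z)=\sup_{Q_I\ni z}\frac{1}{|Q_I|_\alpha}\int_{Q_I}|f|\,dV_\alpha .$$
Using the dyadic-grid reduction (two translated dyadic systems in $\mathbb{R}$ generating dyadic Carleson boxes), I would take Buckley's sharp estimate
$$\|M_\alpha\|_{L^r(v\,dV_\alpha)}\lesssim [v]_{B_{r,\alpha}}^{1/(r-1)}$$
as available in the Bergman setting. From it I would build Rubio de Francia operators
$$\mathcal Rh=\sum_{k\ge0}\frac{M_\alpha^k h}{(2\|M_\alpha\|)^k}.$$
These satisfy $h\le\mathcal Rh$, $\|\mathcal Rh\|\le2\|h\|$, and $[\mathcal Rh]_{B_{1,\alpha}}\le2\|M_\alpha\|$ on the relevant weighted space.

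\emph{Case $q<q_0$.}
1. Write $\|\omega Tf\|_{q,\alpha}^q$ and insert a factor $H^{-\epsilon}H^{\epsilon}$, where $H=\mathcal R\big(\cdot\big)$ is built from $|f|$ and $\omega$, normalized so that $\|\omega f\|_{p,\alpha}$ controls it. The iteration is run on $L^{r}(\sigma\,dV_\alpha)$ with $\sigma=\omega^{-p'}$ or $\omega^q$, choosing $r$ to match the target exponents.
2. Apply H\"older with exponents $q_0/q$ and its dual to separate $\|W\,Tf\|_{q_0,\alpha}$, where $W=\omega H^{-\epsilon}$ for a suitable $\epsilon$ fixed by the exponent bookkeeping.
3. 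Check that $W\in B_{p_0,q_0,\alpha}$ with $[W]_{B_{p_0,q_0,\alpha}}\lesssim[\omega]_{B_{p,q,\alpha}}^{q_0p'/(p_0'q)}$. This uses that $H^{-1}$ enters as a negative power of a $B_{1,\alpha}$ weight, so its averages are controlled by $\inf_Q H$, together with H\"older on the boxes.
4. Apply the hypothesis to $W$, then use $\|WH^{\epsilon}f\|_{p_0,\alpha}\lesssim\|\omega f\|_{p,\alpha}$ (since $f\le H$ pointwise) to conclude.

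\emph{Case $q>q_0$.}
1. Argue by duality: $\|\omega Tf\|_{q,\alpha}^{q_0}=\|(\omega Tf)^{q_0}\|_{q/q_0}$ is tested against $h\ge0$ in $L^{(q/q_0)'}(dV_\alpha)$.
2. Replace $h$ by $\mathcal R h$, built from $M_\alpha$ acting on $L^{(q/q_0)'}$ with the appropriate weight power of $\omega$.
3. The new weight is $W=\omega(\mathcal Rh)^{\delta}$. Here the $B_{1,\alpha}$ property of $\mathcal Rh$ enters through reverse factorization, and Buckley's bound yields $[W]_{B_{p_0,q_0,\alpha}}\lesssim[\omega]_{B_{p,q,\alpha}}$ to the first power.
4. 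The final Hölder step recovers $\|\omega f\|_{p,\alpha}$.

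Combining the two cases gives the exponent $\theta\max\{1,\frac{q_0}{p_0'}\frac{p'}{q}\}$.

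\emph{Main obstacle.} The hard step is the characteristic computation, step 3 of each case: choosing $\epsilon$ and $\delta$ so that $W$ lands in $B_{p_0,q_0,\alpha}$ with exactly the stated power, and so that the power of $\mathcal Rh$ in the dual-weight average is compatible with the $B_{1,\alpha}$ estimate $\langle\mathcal Rh\rangle_{Q_I}\lesssim\inf_{Q_I}\mathcal Rh$. I would first try the exponents from the Euclidean Duoandikoetxea proof verbatim, since Carleson boxes form a doubling-type basis for $dV_\alpha$ and every inequality used is box-by-box H\"older. Only the Buckley bound for $M_\alpha$ needs the dyadic structure.
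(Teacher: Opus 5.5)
The paper does not prove this statement. It is quoted from \cite[Theorem 6]{Sehba6} and used as a black box; only its rescaled form, Theorem~\ref{thm:sharpextra}, is derived in the paper. Your route is the standard Lacey--Moen--P\'erez--Torres/Duoandikoetxea argument that such a result rests on: Buckley's bound for $M_\alpha$ via dyadic Carleson boxes, Rubio de Francia iteration, box-by-box reverse factorization, and the split $q<q_0$ versus $q>q_0$, which matches exactly the two branches of $\max\{1,\frac{q_0}{p_0'}\frac{p'}{q}\}$. These ingredients do carry over to $dV_\alpha$.

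What does not work is the explicit form of the weights you write down.

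\emph{Case $q<q_0$.} If $H\in B_{1,\alpha}$, then $W=\omega H^{-\epsilon}$ gives $W^{q_0}=(\omega^q)^{q_0/q}H^{-\epsilon q_0}$. A power $q_0/q>1$ of $\omega^q$ is not controlled by $[\omega^q]_{B_{r,\alpha}}$: take $H\equiv 1$ and a power weight $\omega^q=|z|^{-s}$ with $s$ just below $2+\alpha$, and $W^{q_0}$ is not even locally integrable. Your H\"older step would also put $H$ in an unweighted $L^t$, so $[H]_{B_{1,\alpha}}$ would carry no information about $\omega$. Finally, $\|WH^{\epsilon}f\|_{p_0,\alpha}=\|\omega f\|_{p_0,\alpha}$ is not the quantity the hypothesis produces; you need to bound $\|Wf\|_{p_0,\alpha}$.

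Set $u=\omega^q$, $r=1+q/p'$, $r_0=1+q_0/p_0'$. Then $[\omega]_{B_{p,q,\alpha}}=[u]_{B_{r,\alpha}}$, and the gap condition reads $r/q=r_0/q_0$. A choice that closes is $H=\mathcal R g$ with $g=|f|^{p/r}\omega^{(p-q)/r}$, iterated on $L^r(u\,dV_\alpha)$, and $W^{q_0}=uH^{-(r_0-r)}$. Then:
\begin{itemize}
\item reverse factorization gives $[W]_{B_{p_0,q_0,\alpha}}\le[u]_{B_{r,\alpha}}[H]_{B_{1,\alpha}}^{r_0-r}\lesssim[\omega]_{B_{p,q,\alpha}}^{(r_0-1)/(r-1)}$;
\item H\"older with exponent $q_0/q$ leaves exactly $\int H^r u\,dV_\alpha\le 2^r\|\omega f\|_{p,\alpha}^p$;
\item $H\ge g$ gives $|f|^{p_0}W^{p_0}\le|\omega f|^p$ pointwise.
\end{itemize}

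\emph{Case $q>q_0$.} Write $\|\omega Tf\|_{q,\alpha}^{q_0}=\int|Tf|^{q_0}hu\,dV_\alpha$ with $\|h\|_{L^{(q/q_0)'}(u\,dV_\alpha)}=1$, and put $\lambda=(r_0-1)/(r-1)\in[0,1)$. Take $H=\mathcal R(h^{1/(1-\lambda)}u)$, iterated on $L^{r'}(\omega^{-p'}dV_\alpha)$, and $W^{q_0}=u^\lambda H^{1-\lambda}$. Then:
\begin{itemize}
\item $u^{1-r'}=\omega^{-p'}$, and $M_\alpha$ has norm $\lesssim[\omega]_{B_{p,q,\alpha}}$ on $L^{r'}(\omega^{-p'}dV_\alpha)$ by Buckley for the dual weight;
\item $r'/(1-\lambda)=(q/q_0)'$, so $h^{1/(1-\lambda)}u$ has norm $1$ in that space;
\item $W^{q_0}\ge hu$, reverse factorization gives the first power of $[\omega]_{B_{p,q,\alpha}}$, and H\"older with exponent $p/p_0$ recovers $\|\omega f\|_{p,\alpha}$.
\end{itemize}
Your form $W=\omega(\mathcal Rh)^\delta$ is consistent with this only if $\mathcal R$ is conjugated by a power of $\omega$. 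With these choices the argument closes.
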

From this we deduce the following.
\begin{theorem}\label{thm:sharpextra}
Let $T$ be an operator defined on a suitable class (e.g. $C_c^{\infty}$ or $\cup_p L^p(\omega^p\,\mathrm{d}V_{\alpha})$, $\alpha>-1$). Suppose $p_0,q_0$ are exponents and $\beta_0$ is a real number, with $1\le p_0\le q_0<\infty$, $\frac{2+\beta_0}{q_0}= \frac{2+\alpha}{p_0}$, and
$$\|\omega Tf\|_{q_0,\beta_0}\le c[\omega]^{\theta}_{B_{p_0,q_0,\alpha}}\|\omega f\|_{p_0,\alpha}$$
for all $\omega\in B_{p_0,q_0,\alpha}$ and some $\theta>0$. Then
$$\|\omega Tf\|_{q,\beta}\le c[\omega]^{\theta\max\{1,\frac{q_0}{p'_0}\cdot\frac{p'}{q}\}}_{B_{p,q,\alpha}}\|\omega f\|_{p,\alpha}$$
for all $p,q,\beta$ satisfying $1<p\le q<\infty$, $\frac{1}{p} - \frac{1}{q} = \frac{1}{p_0} - \frac{1}{q_0}$, and $\frac{2+\beta}{q} = \frac{2+\alpha}{p}$, and for all $\omega\in B_{p,q,\alpha}$.
\end{theorem}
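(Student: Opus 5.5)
Following the strategy announced in the introduction, I would renormalise $T$ by a power of $y=\Im z$ so that the target weight $y^{\beta}$ becomes $y^{\alpha}$, and then apply Theorem \ref{thm:Sehba6} to the renormalised operator. Let $\delta:=\frac{1}{p}-\frac{1}{q}=\frac{1}{p_0}-\frac{1}{q_0}$, which is the same for $(p_0,q_0)$ and $(p,q)$. From $\frac{2+\beta}{q}=\frac{2+\alpha}{p}$ we get
$$\frac{\beta-\alpha}{q}=\frac{2+\beta}{q}-\frac{2+\alpha}{q}=(2+\alpha)\Big(\frac1p-\frac1q\Big)=(2+\alpha)\delta .$$
In the same way, $\frac{\beta_0-\alpha}{q_0}=(2+\alpha)\delta$. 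So the quantity $s:=(2+\alpha)\delta$ depends only on $\alpha$ and $\delta$, and not on the particular pair of exponents. This invariance is the key observation.

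Define $\widetilde T f(z):=y^{s}\,Tf(z)$, with $z=x+iy$. For any exponent pair $(r,t)$ with $\frac1r-\frac1t=\delta$ and associated $\beta_{r,t}$ given by $\frac{2+\beta_{r,t}}{t}=\frac{2+\alpha}{r}$, we have
$$\|\omega \widetilde T f\|_{t,\alpha}^t=\int_{\mathbb{R}_+^2}|Tf|^t\omega^t\,y^{st+\alpha}\,dx\,dy=\|\omega Tf\|_{t,\beta_{r,t}}^t ,$$
because $st+\alpha=\beta_{r,t}$. Applied with $(r,t)=(p_0,q_0)$, the hypothesis reads
$$\|\omega\widetilde Tf\|_{q_0,\alpha}\le c[\omega]^{\theta}_{B_{p_0,q_0,\alpha}}\|\omega f\|_{p_0,\alpha}\quad\text{for all }\omega\in B_{p_0,q_0,\alpha}.$$
This is exactly the hypothesis of Theorem \ref{thm:Sehba6} for the operator $\widetilde T$.

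Theorem \ref{thm:Sehba6} then gives, for every $1<p\le q<\infty$ with $\frac1p-\frac1q=\delta$ and every $\omega\in B_{p,q,\alpha}$,
$$\|\omega\widetilde Tf\|_{q,\alpha}\le c[\omega]^{\theta\max\{1,\frac{q_0}{p_0'}\cdot\frac{p'}{q}\}}_{B_{p,q,\alpha}}\|\omega f\|_{p,\alpha}.$$
Undoing the renormalisation with the identity above, now for $(r,t)=(p,q)$ so that $sq+\alpha=\beta$, turns the left side into $\|\omega Tf\|_{q,\beta}$. This is the claim.

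The only point needing care is whether Theorem \ref{thm:Sehba6} applies to $\widetilde T$. The operator $\widetilde T$ is defined on the same class as $T$, since multiplying by $y^s$ keeps the output measurable and finite wherever $Tf$ is. Theorem \ref{thm:Sehba6} places no structural assumption on $T$ (no linearity or kernel condition is used, only the weighted inequality), so nothing else has to be checked. I therefore expect no real obstacle: the substance lies in verifying that the shift $s$ is independent of the exponent pair, which is the one-line computation above.
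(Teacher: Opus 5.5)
Your proposal is correct and is essentially the paper's own proof. The paper also sets $Sf(x+iy)=y^{(2+\alpha)(\frac{1}{p_0}-\frac{1}{q_0})}Tf(x+iy)$, uses the balance relation to transfer the hypothesis to $\|\omega Sf\|_{q_0,\alpha}$, applies Theorem \ref{thm:Sehba6}, and then undoes the renormalisation using $\frac{2+\beta}{q}=\frac{2+\alpha}{p}$. Your explicit check that the shift $s=(2+\alpha)\delta$ gives $sq_0+\alpha=\beta_0$ and $sq+\alpha=\beta$ is the computation the paper leaves implicit.
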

\begin{proof}
Let $p_0,q_0,\beta_0$ be as in the statement, and suppose
$$\|\omega Tf\|_{q_0,\beta_0}\le c[\omega]^{\theta}_{B_{p_0,q_0,\alpha}}\|\omega f\|_{p_0,\alpha}$$
holds for all $\omega\in B_{p_0,q_0,\alpha}$ and some $\theta>0$. Set $Sf(x+iy)=y^{(2+\alpha)\left(\frac{1}{p_0}-\frac{1}{q_0}\right)}Tf(x+iy)$. Using the balance relation
$\frac{2+\beta_0}{q_0}= \frac{2+\alpha}{p_0}$, we obtain
$$\|\omega Sf\|_{q_0,\alpha}\le c[\omega]^{\theta}_{B_{p_0,q_0,\alpha}}\|\omega f\|_{p_0,\alpha}$$
for all $\omega\in B_{p_0,q_0,\alpha}$ and the same $\theta>0$. Theorem \ref{thm:Sehba6} then gives
$$\|\omega Sf\|_{q,\alpha}\le c[\omega]^{\theta\max\{1,\frac{q_0}{p'_0}\cdot\frac{p'}{q}\}}_{B_{p,q,\alpha}}\|\omega f\|_{p,\alpha}$$
for all $p,q,\beta$ satisfying $1<p\le q<\infty$, $\frac{1}{p} - \frac{1}{q} = \frac{1}{p_0} - \frac{1}{q_0}$, and all $\omega\in B_{p,q,\alpha}$.

Finally, using the balance relation $\frac{2+\beta}{q} = \frac{2+\alpha}{p}$ together with the definition of $S$, we obtain the conclusion of the theorem.
\end{proof}
We now deduce the following weak-type extrapolation result.
\begin{corollary}\label{cor:weaksharpextra}
Suppose that for some $1\le p_0\le q_0<\infty$ and $\frac{2+\beta_0}{q_0}= \frac{2+\alpha}{p_0}$, the operator $T$ satisfies the weak-type $(p_0,q_0)$ inequality
$$\| Tf\|_{L^{q_0,\infty}(\omega^{q_0}\,\mathrm{d}V_{\alpha})}\le c[\omega]_{B_{p_0,q_0,\alpha}}^{\theta}\|\omega f\|_{p_0,\alpha}$$
for every $\omega\in B_{p_0,q_0,\alpha}$ and some $\theta>0$. Then $T$ also satisfies the weak-type $(p,q)$ inequality
$$\| Tf\|_{L^{q,\infty}(\omega^{q}\,\mathrm{d}V_{\alpha})}\le c[\omega]_{B_{p,q,\alpha}}^{\theta\max\{1,\frac{q_0}{p'_0}\cdot\frac{p'}{q}\}}\|\omega f\|_{p,\alpha}$$
for all $1<p\le q<\infty$ satisfying $\frac{1}{p}-\frac{1}{q} = \frac{1}{p_0} - \frac{1}{q_0}$ and $\frac{2+\beta}{q} = \frac{2+\alpha}{p}$, and for all $\omega\in B_{p,q,\alpha}$.
\end{corollary}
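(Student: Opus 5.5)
Corollary \ref{cor:weaksharpextra} is the weak-type analogue of Theorem \ref{thm:sharpextra}. We derive it by the standard device of writing a weak-type quasinorm as a supremum of strong-type norms of truncated operators. We then feed these into Theorem \ref{thm:sharpextra}, checking that the implicit constants are uniform in the truncation.

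I read the measures in the corollary as $\omega^{q_0}\,dV_{\beta_0}$ in the hypothesis and $\omega^q\,dV_\beta$ in the conclusion, consistent with the balance relations. For a measurable set $E\subset\mathbb{R}_+^2$ of finite $\omega^{q_0}dV_{\beta_0}$-measure, consider the linearised operator $T_E f := \chi_E\,\overline{\operatorname{sgn}(Tf)}\,Tf$, or more robustly the family of operators $f\mapsto \chi_E Tf$. Note two facts.

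First, for $0<r<q_0$ the weak space $L^{q_0,\infty}(\mu)$ is normable via Kolmogorov's inequality:
$$\|g\|_{L^{q_0,\infty}(\mu)}\simeq \sup_{E}\mu(E)^{\frac1{q_0}-\frac1r}\Big(\int_E|g|^r d\mu\Big)^{1/r}.$$

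Second, and simpler, the weak quasinorm equals $\sup_{\lambda>0}\lambda\,\mu(\{|g|>\lambda\})^{1/q_0}$, so $\lambda\chi_{\{|Tf|>\lambda\}}$ is pointwise dominated by $|Tf|$.

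The cleanest route is to avoid normability issues. Rerun the proof of Theorem \ref{thm:sharpextra} with the renormalised operator $Sf=y^{(2+\alpha)(1/p_0-1/q_0)}Tf$, noting that $\omega^{q_0}dV_{\beta_0}=\omega^{q_0}y^{\beta_0-\alpha}dV_\alpha$ with $(\beta_0-\alpha)/q_0=(2+\alpha)(1/p_0-1/q_0)$. This converts the weak $(p_0,q_0)$ hypothesis for $T$ into exactly the weak $(p_0,q_0)$ hypothesis for $S$ with the single parameter $\alpha$. We then invoke the weak-type version of \cite[Theorem 6]{Sehba6}, i.e. the weak-type extrapolation from \cite{Sehba6}, which is proved there by the same Rubio de Francia iteration. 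Its proof only uses the level-set formulation: for each $\lambda$, the pair $(\lambda\chi_{\{|Sf|>\lambda\}},f)$ satisfies a strong-type $(p_0,q_0)$ bound with constant $c[\omega]^\theta$. Extrapolation of pairs $(F,f)$ is then applied and the supremum over $\lambda$ is taken. Finally we undo the renormalisation, using $\frac{2+\beta}{q}=\frac{2+\alpha}{p}$, so that $\omega^q\,dV_\beta=(\omega y^{(\beta-\alpha)/q})^q dV_\alpha$ and $(\beta-\alpha)/q=(2+\alpha)(1/p-1/q)=(2+\alpha)(1/p_0-1/q_0)$ is unchanged. This gives the claim with the same exponent $\theta\max\{1,\frac{q_0}{p_0'}\frac{p'}{q}\}$.

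The main obstacle is justifying that extrapolation in Theorem \ref{thm:Sehba6} applies to families of pairs $(F,f)$ rather than a single operator. Specifically, the Rubio de Francia algorithm never uses linearity of $T$, only the inequality $\|\omega F\|_{q_0,\alpha}\le c[\omega]^\theta\|\omega f\|_{p_0,\alpha}$ for all weights. I would verify this by inspection of the proof in \cite{Sehba6}, which builds weights from $f$ and duality functions and applies the hypothesis once. Applied to $F_\lambda=\lambda\chi_{\{|Sf|>\lambda\}}$, this yields $\|\omega F_\lambda\|_{q,\alpha}\lesssim[\omega]^{\theta\max\{\cdot\}}\|\omega f\|_{p,\alpha}$ uniformly in $\lambda$. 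That is precisely the weak-type conclusion. A secondary point is ensuring $F_\lambda$ has finite norms so the iteration is legitimate; for this I would first restrict to $f$ in the suitable dense class and truncate the level sets to bounded subsets of $\mathbb{R}_+^2$, then pass to the limit by monotone convergence.
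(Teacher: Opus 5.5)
Reading the measures as $\omega^{q_0}dV_{\beta_0}$ in the hypothesis and $\omega^{q}dV_{\beta}$ in the conclusion is right; this is how the corollary is used in the proof of Theorem \ref{thm:sharpfracbergnew}. The route you single out as the cleanest, however, has a genuine gap. The renormalisation $Sf=y^{s}Tf$, with $s=(2+\alpha)(\frac{1}{p_0}-\frac{1}{q_0})=\frac{\beta_0-\alpha}{q_0}$, is an identity for strong norms only, not for weak-type quasinorms. You claim it converts the weak $(p_0,q_0)$ hypothesis for $T$ ``exactly'' into one for $S$. But
\[
\|Tf\|_{L^{q_0,\infty}(\omega^{q_0}dV_{\beta_0})}=\sup_{\lambda>0}\lambda\Big(\int_{\{|Tf|>\lambda\}}\omega^{q_0}\,dV_{\beta_0}\Big)^{1/q_0},
\qquad
\|Sf\|_{L^{q_0,\infty}(\omega^{q_0}dV_{\alpha})}=\sup_{\lambda>0}\lambda\Big(\int_{\{y^{s}|Tf|>\lambda\}}\omega^{q_0}\,dV_{\alpha}\Big)^{1/q_0}.
\]
The level sets differ, and in general a weight cannot be moved from the measure into the function inside a Lorentz quasinorm. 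Already on $(0,1)$, $\|1\|_{L^{1,\infty}(x^{-1}dx)}=\infty$ while $\|x^{-1}\|_{L^{1,\infty}(dx)}=1$. So you do not obtain the strong-type $(p_0,q_0)$ bound for $F_\lambda=\lambda\chi_{\{|Sf|>\lambda\}}$ that you feed into extrapolation. The same problem recurs at the end: a uniform bound on $\|\omega F_\lambda\|_{q,\alpha}$, i.e.\ on $\|Sf\|_{L^{q,\infty}(\omega^{q}dV_\alpha)}$, does not yield the claimed bound on $\|Tf\|_{L^{q,\infty}(\omega^{q}dV_\beta)}$. Undoing the renormalisation fails for the same reason.

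The fix is to reverse the order of the two operations, which is what the paper does. Truncate first, using level sets of $Tf$ itself and the original measure, and renormalise only afterwards, at the level of strong norms. For fixed $\lambda>0$ let $T_\lambda f=\lambda\chi_{\{|Tf|>\lambda\}}$. Then $\|\omega T_\lambda f\|_{q_0,\beta_0}=\lambda\big(\int_{\{|Tf|>\lambda\}}\omega^{q_0}dV_{\beta_0}\big)^{1/q_0}\le\|Tf\|_{L^{q_0,\infty}(\omega^{q_0}dV_{\beta_0})}\le c[\omega]^{\theta}_{B_{p_0,q_0,\alpha}}\|\omega f\|_{p_0,\alpha}$. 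This is exactly the hypothesis of Theorem \ref{thm:sharpextra} for the operator $T_\lambda$, with $c$ independent of $\lambda$. Theorem \ref{thm:sharpextra} multiplies by $y^{s}$ internally, where this is harmless: under the two relations in the statement, $\|\omega y^{s}g\|_{q,\alpha}=\|\omega g\|_{q,\beta}$. It returns $\lambda\big(\int_{\{|Tf|>\lambda\}}\omega^{q}dV_{\beta}\big)^{1/q}=\|\omega T_\lambda f\|_{q,\beta}\le c[\omega]^{\theta\max\{1,\frac{q_0}{p_0'}\cdot\frac{p'}{q}\}}_{B_{p,q,\alpha}}\|\omega f\|_{p,\alpha}$, and the supremum over $\lambda$ is the weak-type conclusion. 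Your ``second, simpler'' observation already had the right object, $\lambda\chi_{\{|Tf|>\lambda\}}$. Your worry about extrapolating a nonlinear operator is a point the paper also takes for granted, since Theorem \ref{thm:Sehba6} is stated for an arbitrary operator. The Kolmogorov normability detour is not needed.
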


\begin{proof}
Set $T_{\lambda}f = \lambda\chi_{\{|Tf|>\lambda\}}$. For fixed $\lambda>0$, applying Theorem \ref{thm:sharpextra} to $T_{\lambda}$, we find a constant $c$, independent of $\lambda$, such that
\begin{eqnarray*}
\|\omega T_{\lambda}f\|_{q_0,\alpha} & = & \lambda|\{z\in\mathbb{R}_+^2:|Tf(z)|>\lambda\}|_{\omega^{q_0},\alpha}^{\frac{1}{q_0}}\\
 & \le & \|Tf\|_{L^{q_0,\infty}(\omega^{q_0}\,\mathrm{d}V_{\alpha})}\\
 & \le & c[\omega]_{B_{p_0,q_0,\alpha}}^{\theta}\|\omega f\|_{p_0,\alpha}.
\end{eqnarray*}
Thus, if $\omega\in B_{p,q,\alpha}$ and $\frac{1}{p} - \frac{1}{q} = \frac{1}{p_0} - \frac{1}{q_0}$, Theorem \ref{thm:sharpextra} shows that
$$T_{\lambda}: L^p(\omega^p\,\mathrm{d}V_{\alpha})\longrightarrow L^q(\omega^q\,\mathrm{d}V_{\alpha})$$
is bounded, with
$$\| \omega T_{\lambda}f\|_{q,\alpha}\le c[\omega]_{B_{p,q,\alpha}}^{\theta\max\{1,\frac{q_0}{p'_0}\cdot\frac{p'}{q}\}}\|\omega f\|_{p,\alpha},$$
where $c$ does not depend on $\lambda$. Hence
\begin{eqnarray*}
\| Tf\|_{L^{q,\infty}(\omega^{q}\,\mathrm{d}V_{\alpha})} & = & \sup_{\lambda>0}\|\omega T_{\lambda}f\|_{q,\alpha}\\
&\le& c[\omega]_{B_{p,q,\alpha}}^{\theta\max\{1,\frac{q_0}{p'_0}\cdot\frac{p'}{q}\}}\|\omega f\|_{p,\alpha}.
\end{eqnarray*}
\end{proof}

To prove Theorem \ref{thm:sharpweakbergnew} we first establish the following estimate.
\begin{proposition}\label{prop:weak initial}
Let $-1<\gamma\le \alpha<\infty$, and define $q_0$ by $\frac{2+\beta_0}{q_0}=2+\gamma$. Then, for any weight $u$,
\Be\label{eq:Bermaxbound}\| P_{\alpha}^+f\|_{L^{q_0,\infty}(u\,\mathrm{d}V_{\beta_0})}\le c\|f\|_{1,(\mathcal{M}_{\gamma}u)^{\frac{1}{q_0}},\gamma}.\Ee
\end{proposition}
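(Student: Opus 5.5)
We will prove \eqref{eq:Bermaxbound} by duality from a pointwise estimate of the kernel. Here $\mathcal{M}_\gamma u(z)=\sup_{Q_I\ni z}\frac{1}{|Q_I|_\gamma}\int_{Q_I}u\,dV_\gamma$ is the weighted maximal function over Carleson boxes. By homogeneity we may assume $f\ge 0$ and $\|f\|_{1,(\mathcal{M}_\gamma u)^{1/q_0},\gamma}=1$.

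Fix $\lambda>0$ and put $E_\lambda=\{z:P_\alpha^+f(z)>\lambda\}$. We must show
$$\lambda^{q_0}\int_{E_\lambda}u\,dV_{\beta_0}\lesssim 1.$$
Since $P_\alpha^+$ is positive, a Minkowski-type argument reduces everything to a single point mass. More precisely, write $f\,dV_\gamma$ as a superposition of masses at points $w$, with $P_\alpha^+f(z)=\int K(z,w)f(w)\,dV_\gamma(w)$ and
$$K(z,w)=\frac{(\Im w)^{\alpha-\gamma}}{|z-\overline w|^{2+\alpha}}.$$
Because $\alpha\ge\gamma$ and $|z-\overline w|\ge \Im w$, we have
$$K(z,w)\le |z-\overline w|^{-(2+\gamma)}.$$
So it suffices to treat the kernel $|z-\overline w|^{-(2+\gamma)}$. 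This is the step where the hypothesis $\gamma\le\alpha$ is used.

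\textbf{Key single-point estimate.} For fixed $w$ and $t>0$, the level set $\{z:|z-\overline w|^{-(2+\gamma)}>t\}$ is contained in a box $Q_I$ with $|I|\simeq t^{-1/(2+\gamma)}$ that contains $w$ (when nonempty, $|I|\gtrsim \Im w$). On that box $y\lesssim |I|$. Since $\beta_0\ge\gamma$ (from $2+\beta_0=q_0(2+\gamma)$ and $q_0\ge1$), we get
$$t^{q_0}\int_{Q_I}u\,dV_{\beta_0}\lesssim t^{q_0}|I|^{\beta_0-\gamma}|Q_I|_\gamma\,\mathcal{M}_\gamma u(w)\simeq t^{q_0}|I|^{2+\beta_0}\mathcal{M}_\gamma u(w)\simeq \mathcal{M}_\gamma u(w).$$
The last step uses $|I|^{2+\beta_0}=|I|^{q_0(2+\gamma)}\simeq t^{-q_0}$, which is exactly the balance relation defining $q_0$. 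Thus the point mass $\delta_w$ satisfies the bound with right-hand side $\mathcal{M}_\gamma u(w)^{1/q_0}$ after taking $q_0$-th roots.

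\textbf{Passing to general $f$ (main obstacle).} Weak $L^{q_0}$ is not normable when $q_0=1$. For $q_0>1$ it is normable up to a constant, so the superposition (Minkowski) step is fine. For the general case I would instead use a dyadic argument.

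First, replace the kernel by a sum over dyadic boxes:
$$|z-\overline w|^{-(2+\gamma)}\lesssim \sum_{Q\in\mathcal D^{(j)},\,z,w\in Q}|Q|_\gamma^{-1}|I_Q|^{\gamma}\cdot|I_Q|^{-\gamma},$$
using two shifted dyadic grids $j=1,2$. This reduces the problem to the positive dyadic operator
$$A f(z)=\sum_{Q\ni z}\chi_Q(z)\,\frac{1}{|Q|_\gamma}\int_Q f\,dV_\gamma.$$
Next, select the maximal boxes $Q_k$ on which $\frac{1}{|Q_k|_\gamma}\int_{Q_k}f\,dV_\gamma>\lambda$. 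Then $E_\lambda\subset\bigcup Q_k$ up to a standard geometric-series tail; that tail is where I expect the most care to be needed. This gives
$$\lambda^{q_0}\int_{\cup Q_k}u\,dV_{\beta_0}\le\sum_k\lambda^{q_0}|I_k|^{\beta_0-\gamma}\int_{Q_k}u\,dV_\gamma.$$
Now $\lambda|Q_k|_\gamma<\int_{Q_k}f\,dV_\gamma$, and $|Q_k|_\gamma^{q_0}\simeq |I_k|^{2+\beta_0}$. Hence each term is bounded by
$$\Bigl(\int_{Q_k}f\,dV_\gamma\Bigr)^{q_0}\inf_{Q_k}\mathcal{M}_\gamma u\le\Bigl(\int_{Q_k}f(\mathcal{M}_\gamma u)^{1/q_0}\,dV_\gamma\Bigr)^{q_0}.$$
Finally, the boxes $Q_k$ are disjoint and $q_0\ge1$, so $\sum a_k^{q_0}\le(\sum a_k)^{q_0}$. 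Summing over $k$ closes the argument.
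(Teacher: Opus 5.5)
For $q_0>1$ your argument is essentially the paper's. You place the level set $\{|\cdot-\bar w|<r\}$ of the kernel in a Carleson box of side $\approx r$ that contains the pole. Then $\beta_0\ge\gamma$ and the balance $2+\beta_0=q_0(2+\gamma)$ bound the $L^{q_0,\infty}(u\,dV_{\beta_0})$ quasi-norm of the kernel by $(\mathcal{M}_\gamma u)^{1/q_0}$ at the pole, and you integrate in $w$ using the Minkowski inequality in the normable space $L^{q_0,\infty}$. Your preliminary domination $P_\alpha^+|f|\le P_\gamma^+|f|$ (from $\Im m\,w\le|z-\bar w|$ and $\alpha\ge\gamma$) just repackages the paper's step $|I|^{\gamma-\alpha}\le(\Im m\,z)^{\gamma-\alpha}$. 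This part is complete, and it is all the paper proves: its constant $c_{q_0}$ comes from the Minkowski inequality in $L^{q_0,\infty}$, which is unavailable at $q_0=1$, and the proposition is only applied with $q_0>1$.

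The dyadic paragraph you offer for the endpoint $q_0=1$ has a genuine gap, exactly at the step you flagged. The inclusion of $E_\lambda$ in $\bigcup_k Q_k$ ``up to a geometric tail'' is false for $Af(z)=\sum_{Q\ni z}\langle f\rangle_Q$. That sum runs over the whole chain of Carleson boxes above $z$, and the averages need not decay along the chain; this is the logarithmic unboundedness of $P_\gamma^+$ on $L^\infty$. For example, take $f=\chi_{Q_0}$ with $Q_0$ the dyadic box over $[0,1)$. Every average $\langle f\rangle_Q$ is at most $1$, so for $\lambda\ge1$ there are no stopping boxes at all, and lowering the threshold to $c\lambda$ does not help once $\lambda\ge 1/c$. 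Yet $Af(x+iy)\ge\lfloor\log_2(1/y)\rfloor$ for $x\in[0,1)$, so $E_\lambda$ has positive measure for every $\lambda$. Your stopping-time computation therefore proves the weak-type estimate for the maximal operator $\mathcal{M}_\gamma$, not for $A$. At $q_0=1$ the proposition is a Muckenhoupt--Wheeden type endpoint bound $L^1(\mathcal{M}_\gamma u\,dV_\gamma)\to L^{1,\infty}(u\,dV_\gamma)$ for arbitrary $u$. That would need a genuinely different idea; its Euclidean analogue for the Hilbert transform is false. You should restrict the statement to $q_0>1$ and drop the dyadic part.
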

\begin{proof}
We proceed as in \cite[Proposition 2]{Sehba6}. By \cite[Theorem 6.3]{BKS}, we have
$$\| P_{\alpha}^+f\|_{L^{q_0,\infty}(u\,\mathrm{d}V_{\beta_0})}\le c_{q_0} \int_{\mathbb{R}^2_+}|f(z)|\,\||\cdot - \bar{z}|^{-2-\alpha}\|_{L^{q_0,\infty}(u\,\mathrm{d}V_{\beta_0})}\,\mathrm{d}V_{\alpha}(z).$$
We now estimate $\||\cdot - \bar{z}|^{-2-\alpha}\|_{L^{q_0,\infty}(u\,\mathrm{d}V_{\beta_0})}$.

For any $t>0$,
\begin{eqnarray*}
A & : = & t|\{w\in\mathbb{R}^2_+ : |w-\bar{z}|^{-2-\alpha} > t\}|^{\frac{1}{q_0}}_{u,\beta_0}\\
&=& t\left|\left\{w\in\mathbb{R}^2_+ : |w-\bar{z}| <\left(\frac{1}{t}\right)^{\frac{1}{2+\alpha}}\right\}\right|^{\frac{1}{q_0}}_{u,\beta_0}\\
&=&\frac{1}{\lambda^{2+\alpha}}|\{w\in\mathbb{R}^2_+ : |w-\bar{z}| <\lambda \}|^{\frac{1}{q_0}}_{u,\beta_0}.
\end{eqnarray*}
Since $\{w\in\mathbb{R}^2_+ : |w-\bar{z}| <\lambda \}$ is contained in the Carleson box $Q_I$, where $I$ is the interval centered at $\Re\, z$ with length $|I|=\lambda$, this observation together with the relation $\beta_0=(2+\gamma)(q_0-1)+\gamma$ gives
\begin{eqnarray*}
A & = & \frac{1}{\lambda^{2+\alpha}}|\{w\in\mathbb{R}^2_+ : |w-\bar{z}| <\lambda \}|^{\frac{1}{q_0}}_{u,\beta_0}\\
 & \le & \frac{1}{\lambda^{2+\alpha}}|Q_I|^{\frac{1}{q_0}}_{u,\beta_0}\\ &\le& \frac{|I|^{\frac{\beta_0-\gamma}{q_0}}}{\lambda^{2+\alpha}}|Q_I|^{\frac{1}{q_0}}_{u,\gamma}\\ &\le&  |I|^{\gamma-\alpha}\left( \frac{1}{|I|^{2+\gamma}}|Q_I|_{u,\gamma}\right)^{\frac{1}{q_0}}\\ &\le&  \left(\Im m\, z\right)^{\gamma-\alpha}\sup_I\left( \frac{1}{|I|^{2+\gamma}}|Q_I|_{u,\gamma}\right)^{\frac{1}{q_0}}\\ &=& \left(\Im m\, z\right)^{\gamma-\alpha}\left( \mathcal{M}_{\gamma}u\right)^{\frac{1}{q_0}}.
\end{eqnarray*}
Thus
\begin{eqnarray*}
\||\cdot - \bar{z}|^{-2-\alpha}\|_{L^{q_0,\infty}(u\,\mathrm{d}V_{\beta_0})} & : = &\sup_{t>0} t|\{w\in\mathbb{R}^2_+ : |w-\bar{z}|^{-2-\alpha} > t\}|^{\frac{1}{q_0}}_{u,\beta_0}\\ &\le&  \left(\Im m\, z\right)^{\gamma-\alpha}\left( \mathcal{M}_{\gamma}u\right)^{\frac{1}{q_0}}.
\end{eqnarray*}
Consequently,
$$\| P_{\alpha}^+f\|_{L^{q_0,\infty}(u\,\mathrm{d}V_{\beta_0})}\le c_{q_0} \int_{\mathbb{R}^2_+}|f(z)|(\mathcal{M}_{\gamma}u)^{\frac{1}{q_0}}\,\mathrm{d}V_{\gamma}.$$
This completes the proof.
\end{proof}
From the above proposition we deduce the following.
\begin{corollary}\label{cor:weak initialestim}
Let $-1<\gamma\leq \alpha$, and define $q_0$ by $\frac{2+\beta_0}{q_0}=2+\gamma$. Then
\Be\label{eq:weightmax}\| P_{\alpha}^+f\|_{L^{q_0,\infty}(\omega^{q_0}\,{d}V_{\beta_0})}\le c[\omega]_{B_{1,q_0,\gamma}}^{\frac{1}{q_0}}\|f\|_{1,\omega,\gamma}.\Ee
\end{corollary}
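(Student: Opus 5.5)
We apply Proposition \ref{prop:weak initial} with the particular weight $u=\omega^{q_0}$ and then control the maximal function of $\omega^{q_0}$ pointwise by $\omega$ itself, using the defining property of the class $B_{1,q_0,\gamma}$. With this choice of $u$, \eqref{eq:Bermaxbound} reads
$$\| P_{\alpha}^+f\|_{L^{q_0,\infty}(\omega^{q_0}\,dV_{\beta_0})}\le c\int_{\mathbb{R}^2_+}|f(z)|\,\big(\mathcal{M}_{\gamma}(\omega^{q_0})(z)\big)^{\frac{1}{q_0}}\,dV_{\gamma}(z).$$
It therefore suffices to prove the pointwise bound
$$\mathcal{M}_{\gamma}(\omega^{q_0})(z)\le [\omega]_{B_{1,q_0,\gamma}}\,\omega(z)^{q_0}\quad\text{for a.e. } z\in\mathbb{R}^2_+ ,$$
which, after taking $q_0$-th roots, turns the right-hand side into $c[\omega]_{B_{1,q_0,\gamma}}^{1/q_0}\int|f|\omega\,dV_\gamma=c[\omega]_{B_{1,q_0,\gamma}}^{1/q_0}\|f\|_{1,\omega,\gamma}$. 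This is exactly \eqref{eq:weightmax}.

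The pointwise bound comes from the endpoint class $B_{1,q,\gamma}$. Its natural meaning, the limit $p\to1$ of $B_{p,q,\gamma}$ in which the factor $\big(\frac{1}{|Q_I|_\gamma}\int_{Q_I}\omega^{-p'}dV_\gamma\big)^{q/p'}$ becomes $\big(\operatorname{ess\,sup}_{Q_I}\omega^{-1}\big)^{q}$, is
$$[\omega]_{B_{1,q,\gamma}}=\sup_I\Big(\frac{1}{|Q_I|_\gamma}\int_{Q_I}\omega^{q}\,dV_\gamma\Big)\,\Big(\operatorname*{ess\,sup}_{Q_I}\omega^{-1}\Big)^{q}.$$
Consequently, for every interval $I$ and a.e. $z\in Q_I$,
$$\frac{1}{|Q_I|_\gamma}\int_{Q_I}\omega^{q_0}\,dV_\gamma\le[\omega]_{B_{1,q_0,\gamma}}\,\omega(z)^{q_0}.$$
Now $|Q_I|_\gamma=c_\gamma|I|^{2+\gamma}$. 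The maximal function $\mathcal{M}_\gamma u(z)$ in Proposition \ref{prop:weak initial} is the supremum of $|I|^{-2-\gamma}|Q_I|_{u,\gamma}$ over the Carleson boxes $Q_I$ containing $z$. Taking that supremum gives the pointwise bound up to the constant $c_\gamma$, which is absorbed into $c$.

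The main obstacle is the null-set issue in passing from "a.e. $z\in Q_I$ for each $I$" to "a.e. $z$, uniformly in $I$", since there are uncountably many boxes. I would first restrict to intervals with rational endpoints. That restriction is countable, so the exceptional sets can be united into a single null set. I would then use the fact that any box containing $z$ can be approximated from slightly larger rational boxes: the averages $|I|^{-2-\gamma}|Q_I|_{\omega^{q_0},\gamma}$ change continuously under such enlargements, and $z$ stays inside the enlarged box. A second point to check is that the operator in the proof of Proposition \ref{prop:weak initial} is applied with $u$ only assumed locally integrable in the weighted sense. Since $\omega\in B_{1,q_0,\gamma}$ forces $\omega^{q_0}\in L^1_{loc}(dV_\gamma)$ on boxes, that proof applies verbatim.
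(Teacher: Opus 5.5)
Your proposal is correct and is essentially the paper's argument: substitute $u=\omega^{q_0}$ into \eqref{eq:Bermaxbound} and use the pointwise bound $\mathcal{M}_\gamma(\omega^{q_0})\le c[\omega^{q_0}]_{B_{1,\gamma}}\,\omega^{q_0}=c[\omega]_{B_{1,q_0,\gamma}}\,\omega^{q_0}$. Beyond the paper's two-line proof, you also spell out the endpoint definition of $B_{1,q,\gamma}$ and the reduction to countably many rational boxes for the a.e.\ statement, details the paper leaves implicit.
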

\begin{proof}
If $u=\omega^{q_0}$, then $$\mathcal{M}_{\gamma}u\le c[u]_{B_{1,\gamma}}u=c[\omega]_{B_{1,q_0,\gamma}}u.$$
Substituting into \eqref{eq:Bermaxbound} gives \eqref{eq:weightmax}.
\end{proof}
We are now ready to prove our first main result.
\begin{proof}[Proof of Theorem \ref{thm:sharpfracbergnew}]
Let $\alpha\geq \gamma>-1$ and $1\leq p\leq q<\infty$. Define $\beta$ by $\frac{2+\beta}{q}=\frac{2+\gamma}{p}$. Choose $\beta_0$ by $\frac{2+\gamma}{2+\beta_0}=\frac{1}{p'}+\frac{1}{q}$, and $q_0>1$ by $\frac{2+\beta_0}{q_0}=2+\gamma$. Put $p_0=1$. By Corollary \ref{cor:weak initialestim}, for any $\omega\in B_{p_0,q_0,\gamma}$,
$$\|P_{\alpha}^+f\|_{L^{q_0,\infty}(\omega^{q_0}\,{d}V_{\beta_0})}\le c[\omega]_{B_{p_0,q_0,\gamma}}^{\frac{1}{q_0}}\|f\|_{p_0,\omega^{p_0},\gamma}.$$
Since $\frac{1}{p}-\frac{1}{q}=\frac{1}{p_0}-\frac{1}{q_0}$, Corollary \ref{cor:weaksharpextra} yields, for $\beta>-1$, $1\le p\le q<\infty$ with $\frac{1}{p}-\frac{1}{q}=1-\frac{1}{q_0}$ and $\frac{2+\beta}{q}=\frac{2+\gamma}{p}$, and for all $\omega\in B_{p,q,\gamma}$,
\begin{equation}\label{eq:sharweakend1}\|P_{\alpha}^+f\|_{L^{q,\infty}(\omega^{q}\,{d}V_{\beta})}\le c[\omega]_{B_{p,q,\gamma}}^{\frac{1}{p'}+\frac{1}{q}}\|f\|_{p,\omega^p,\gamma}.\end{equation}
This completes the proof.
\end{proof}

We now prove that the above estimate is sharp.
\begin{proof}[Proof of sharpness]
Put $u=\omega^q$; recall that $[\omega]_{B_{p,q,\gamma}}=[u]_{B_{1+\frac{q}{p'},\gamma}}$. Hence \eqref{eq:sharweakend1} is equivalent to
\begin{equation}\label{eq:sharweakend2}\|P_{\alpha}^+f\|_{L^{q,\infty}(u\,{d}V_{\beta})}\le c[u]_{B_{1+\frac{q}{p'},\gamma}}^{\frac{1}{p'}+\frac{1}{q}}\|f\|_{p,u^{p/q},\gamma}.
\end{equation}
Since $\frac{1}{p}-\frac{1}{q}=\frac{1}{q_0'}$, \eqref{eq:sharweakend2} is equivalent to
    \begin{equation}\label{eq:sharweakend3}\|P_{\alpha}^+(u^{\frac{1}{q_0'}}f)\|_{L^{q,\infty}(u\,{d}V_{\beta})}\le c[u]_{B_{1+\frac{q}{p'},\gamma}}^{\frac{1}{p'}+\frac{1}{q}}\|f\|_{p,u,\gamma}.
\end{equation}
Since $[u]_{B_{1+\frac{q}{p'},\gamma}}\le [u]_{B_{1,\gamma}}$, assuming $u\in B_{1,\gamma}$ turns \eqref{eq:sharweakend3} into
\begin{equation}\label{eq:sharweakend4}\|P_{\alpha}^+(u^{\frac{1}{q_0'}}f)\|_{L^{q,\infty}(u\,{d}V_{\beta})}\le c[u]_{B_{1,\gamma}}^{\frac{1}{p'}+\frac{1}{q}}\|f\|_{p,u,\gamma}.
\end{equation}
It therefore suffices to prove that \eqref{eq:sharweakend4} is sharp.

Let $1-\frac{1+\gamma}{1+\beta}<\delta<1$, and set $$u(z)=|z|^{-s+\beta-\gamma}, \qquad s=(1+\beta)(1-\delta).$$
Note that $-s+\beta>-1$. It is not hard to check that
$$[u]_{B_{1,\gamma}}\approx \frac{1}{\delta}.$$
Consider $f(z)=1_{\{z\in \mathbb{R}_+^2:|z|\leq 1\}}(z)=:1_{\mathbb{B}}$. Then
$$\|f\|_{p,u,\gamma}\approx \delta^{-1/p}.$$
Let $0<\eta=\eta(\delta)<1$ be a parameter to be fixed. We have
\Beas
&& \|P_{\alpha}^+(u^{\frac{1}{q_0'}}f)\|_{L^{q,\infty}(u\,{d}V_{\beta})}\\ &\ge& \sup_{\lambda>0}\lambda|\{z\in \mathbb{R}_+^2\cap B(0,\eta):\int_{\mathbb{B}}\frac{|w|^{\frac{(1+\beta)\delta-\gamma-1}{q_0'}}}{|z-\bar{w}|^{2+\alpha}}dV_\alpha(w)>\lambda\}|_{u,\beta}^{1/q}\\ &\ge&  \sup_{\lambda>0}\lambda|\{z\in \mathbb{R}_+^2\cap B(0,\eta):\int_{\mathbb{B}\setminus B(0,|z|)}\frac{|w|^{\frac{(1+\beta)\delta}{q_0'}}}{(2|{w}|)^{2+\alpha}}dV_\alpha(w)>\lambda\}|_{u,\beta}^{1/q}\\ &=& \sup_{\lambda>0}\lambda|\{z\in \mathbb{R}_+^2\cap B(0,\eta):\frac{q_0'}{2^{2+\alpha}(1+\beta)\delta}(1-|z|^{\frac{(1+\beta)\delta}{q_0'}})>\lambda\}|_{u,\beta}^{1/q}\\ &\ge& \frac{q_0'}{2^{3+\alpha}(1+\beta)\delta}|\{z\in \mathbb{R}_+^2\cap B(0,\eta):\frac{q_0'}{2^{2+\alpha}(1+\beta)\delta}(1-|z|^{\frac{(1+\beta)\delta}{q_0'}})>\frac{q_0'}{2^{3+\alpha}(1+\beta)\delta}\}|_{u,\beta}^{1/q}\\ &=& \frac{q_0'}{2^{3+\alpha}(1+\beta)\delta}|\{z\in \mathbb{R}_+^2\cap B(0,\eta):|z|<\left(\frac{1}{2}\right)^{\frac{q_0'}{(1+\beta)\delta}}\}|_{u,\beta}^{1/q}.
\Eeas
Taking $\eta=\left(\frac{1}{2}\right)^{\frac{q_0'}{(1+\beta)\delta}}$, and using that $\beta-\gamma<(1+\beta)\delta$, we obtain
\Beas\|P_{\alpha}(u^{\frac{1}{q_0'}}f)\|_{L^{q,\infty}(u\,{d}V_{\beta})} &\ge& \frac{1}{2^{3+\alpha}(1+\beta)\delta}|\{z\in \mathbb{R}_+^2: |z|<\eta\}|_{u,\beta}^{1/q}\\ &\geq&
\frac{c}{2^{3+\alpha}(1+\beta)\delta}\left(\frac{\eta^{(1+\beta)\delta}}{(1+\beta)\delta}\right)^{1/q}\\ &=& c\frac{1}{\delta}\left(\frac{1}{\delta}\right)^{1/q}\\
&=& c\left(\frac{1}{\delta}\right)^{1/p}\left(\frac{1}{\delta}\right)^{\frac{1}{p'}+\frac{1}{q}}\\ &\approx& [u]_{B_{1,\alpha}}^{\frac{1}{p'}+\frac{1}{q}}\|f\|_{p,u,\gamma}.
\Eeas
This completes the proof.
\end{proof}
\subsection{Proof of Theorem \ref{thm:sharpweakbergnew}}
We begin with the Sawyer-type characterizations of the boundedness of the Bergman-type operator defined in \eqref{eq:fracBergdef}, as derived from \cite[Theorems 1 and 2]{Sehba6}.
\vskip .1cm
We first record the following weak-type result, derived from \cite[Theorem 1]{Sehba6}.
\begin{proposition}\label{prop:sawyerweak}
Let $\alpha,\beta,\gamma>-1$, let $v$ and $u$ be positive locally integrable functions, and let $1< p\le q<\infty$. Then $P_{\alpha}^+:L^p(v\,\mathrm{d}V_{\gamma})\longrightarrow L^{q,\infty}(u\,\mathrm{d}V_{\beta})$ is bounded if and only if $u$ and $\sigma := v^{1-p'}$ satisfy
$$[\sigma,u]_{q',p',\alpha,\beta,\gamma}:=\sup_{I\in \mathcal{I}}|Q_I|^{-\frac{1}{q'}}_{u,\beta}\|1_{Q_I}P_{\alpha}^+(1_{Q_I}y^{\beta-\alpha}u) \|_{p',\sigma,\alpha+\left(\gamma-\alpha\right)(1-p')}<\infty.$$
Moreover,
\Be\label{eq:sawyerweak}\|P_{\alpha}^+\|_{L^p(v\,\mathrm{d}V_{\gamma})\longrightarrow L^{q,\infty}(u\,\mathrm{d}V_{\beta})} \approx [\sigma,u]_{q',p',\alpha,\beta,\gamma}.
\Ee
\end{proposition}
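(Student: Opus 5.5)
The plan is to obtain Proposition \ref{prop:sawyerweak} from the one-parameter weak-type Sawyer characterization \cite[Theorem 1]{Sehba6} by absorbing the powers of $y=\Im m\, z$ into the weights. I take that theorem in the form: for positive locally integrable $\tilde v,\tilde u$ and $1<p\le q<\infty$, the operator $P_\alpha^+:L^p(\tilde v\,dV_\alpha)\to L^{q,\infty}(\tilde u\,dV_\alpha)$ is bounded if and only if
$$\sup_{I}|Q_I|_{\tilde u,\alpha}^{-1/q'}\big\|1_{Q_I}P_\alpha^+(1_{Q_I}\tilde u)\big\|_{p',\tilde\sigma,\alpha}<\infty,\qquad \tilde\sigma=\tilde v^{1-p'},$$
and the operator norm is comparable to this supremum. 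The point is that $\alpha$, the parameter of the operator, stays fixed. Only the measures on the domain and target sides move.

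\textbf{Step 1 (rewriting the spaces).} Since $dV_\gamma=y^{\gamma-\alpha}\,dV_\alpha$ and $dV_\beta=y^{\beta-\alpha}\,dV_\alpha$, we have the isometric identities
$$L^p(v\,dV_\gamma)=L^p(\tilde v\,dV_\alpha),\quad \tilde v:=v\,y^{\gamma-\alpha},\qquad L^{q,\infty}(u\,dV_\beta)=L^{q,\infty}(\tilde u\,dV_\alpha),\quad \tilde u:=u\,y^{\beta-\alpha}.$$
The second identity holds because the distribution function with respect to $u\,dV_\beta$ equals the one with respect to $\tilde u\,dV_\alpha$. Hence the two operator norms coincide.

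\textbf{Step 2 (applying \cite[Theorem 1]{Sehba6}).} The dual weight is $\tilde\sigma=\tilde v^{1-p'}=\sigma\,y^{(\gamma-\alpha)(1-p')}$. I then translate each quantity in the testing condition back.
\begin{itemize}
\item For the box measure, $|Q_I|_{\tilde u,\alpha}=\int_{Q_I}u\,y^{\beta-\alpha}y^\alpha=|Q_I|_{u,\beta}$.
\item The tested function is $1_{Q_I}\tilde u=1_{Q_I}y^{\beta-\alpha}u$.
\item For the norm, $\|g\|_{p',\tilde\sigma,\alpha}^{p'}=\int|g|^{p'}\sigma\,y^{(\gamma-\alpha)(1-p')+\alpha}$, which is exactly $\|g\|^{p'}_{p',\sigma,\alpha+(\gamma-\alpha)(1-p')}$.
\end{itemize}
Substituting these, the testing constant of \cite[Theorem 1]{Sehba6} becomes literally $[\sigma,u]_{q',p',\alpha,\beta,\gamma}$. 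Both the equivalence and the norm comparison \eqref{eq:sawyerweak} then follow, with the implicit constants from \cite{Sehba6}, which depend only on $p,q,\alpha$.

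\textbf{Main obstacle.} The delicate point is checking that \cite[Theorem 1]{Sehba6} applies to the modified weights. These need not be locally integrable up to the boundary: $y^{\beta-\alpha}$ or $y^{(\gamma-\alpha)(1-p')}$ may blow up as $y\to0$. I would handle this in two ways.

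First, the proof in \cite{Sehba6} uses only positivity and local integrability in the interior, not doubling or any $B_p$ property. Its argument runs as follows: a duality reformulation of the weak-type inequality as a restricted bound $\|P_\alpha^+(1_E\tilde u)\|_{p',\tilde\sigma,\alpha}\lesssim|E|_{\tilde u,\alpha}^{1/q'}$; then a dyadic (two-grid) discretization of $P_\alpha^+$ into positive sums over upper halves of dyadic Carleson boxes; then a Sawyer-type localization to test on boxes. Each of these steps is insensitive to the specific weights.

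Second, where finiteness is needed, I would argue by truncation. Replace $u$ and $v$ by their restrictions to $\{y>\varepsilon\}$ and to compact sets, where all the weights are bounded. Apply the theorem there, noting that the testing constants of the truncations are dominated by the untruncated ones. Then let $\varepsilon\to0$ by monotone convergence.

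If this reduction fails, the fallback is to repeat the proof of \cite[Theorem 1]{Sehba6} verbatim with the three measures $\sigma\,dV_{\alpha+(\gamma-\alpha)(1-p')}$, $u\,dV_\beta$, and the kernel measure $dV_\alpha$ kept separate.
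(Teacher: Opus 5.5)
Your proposal is correct and matches the paper's route. The paper gives no argument beyond saying the proposition is derived from Theorem 1 of Sehba's earlier paper, and the intended derivation is exactly yours: write $L^p(v\,dV_\gamma)=L^p(vy^{\gamma-\alpha}\,dV_\alpha)$ and $L^{q,\infty}(u\,dV_\beta)=L^{q,\infty}(uy^{\beta-\alpha}\,dV_\alpha)$, and the one-parameter testing constant then becomes $[\sigma,u]_{q',p',\alpha,\beta,\gamma}$ verbatim.

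Your boundary-integrability worry is essentially moot. The measures $\tilde u\,dV_\alpha$ and $\tilde\sigma\,dV_\alpha$ are literally $u\,dV_\beta$ and $\sigma\,dV_{\alpha+(\gamma-\alpha)(1-p')}$, and the powers of $y$ are bounded on compact subsets of the open half-plane. So the truncation fallback is not needed. It is also not sound as stated: truncating $u$ need not decrease the box-testing constant, since the factor $|Q_I|_{u,\beta}^{-1/q'}$ grows when $u$ is truncated.
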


The following strong-type Sawyer result follows from \cite[Theorem 2]{Sehba6}.
\begin{proposition}\label{thm:sawyer}
Let $\alpha,\beta,\gamma>-1$, let $v$ and $u$ be positive locally integrable functions, and let $1<p\le q<\infty$. Then $P_{\alpha}^+:L^p(v\,\mathrm{d}V_{\gamma})\longrightarrow L^q(u\,\mathrm{d}V_{\beta})$ is bounded if and only if $u$ and $\sigma := v^{1-p'}$ satisfy
$$[u,\sigma]_{S_{p,q,\alpha,\beta,\gamma}}:= \sup_{I\in \mathcal{I}}|Q_I|^{-\frac{1}{p}}_{\sigma,\alpha+\left(\gamma-\alpha\right)(1-p')}\|1_{Q_I}P_{\alpha}^+(1_{Q_I}y^{\left(\gamma-\alpha\right)(1-p')}\sigma) \|_{q,u,\beta}<\infty$$
and
$$[\sigma,u]_{q',p',\alpha,\beta,\gamma}:=\sup_{I\in \mathcal{I}}|Q_I|^{-\frac{1}{q'}}_{u,\beta}\|1_{Q_I}P_{\alpha}^+(1_{Q_I}y^{\beta-\alpha}u) \|_{p',\sigma,\alpha+\left(\gamma-\alpha\right)(1-p')}<\infty.$$
Moreover
\Be\label{eq:sawyer}\|P_{\alpha}^+\|_{L^p(v\,\mathrm{d}V_{\gamma})\longrightarrow L^q(u\,\mathrm{d}V_{\beta})} \approx [u,\sigma]_{S_{p,q,\alpha,\beta,\gamma}} + [\sigma,u]_{S_{q',p',\alpha,\beta,\gamma}}.
\Ee
\end{proposition}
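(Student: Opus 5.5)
The statement immediately above is Proposition \ref{thm:sawyer}, the strong-type Sawyer characterization. I would not prove it from scratch. Instead I would reduce it to the one-parameter testing theorem \cite[Theorem 2]{Sehba6} by a change of weights that absorbs the mismatched powers of $y$. Write $dV_\gamma = y^{\gamma-\alpha}dV_\alpha$ and $dV_\beta=y^{\beta-\alpha}dV_\alpha$. Then
\[
\|f\|_{p,v,\gamma}=\|f\|_{p,\tilde v,\alpha},\qquad \|g\|_{q,u,\beta}=\|g\|_{q,\tilde u,\alpha},
\]
with $\tilde v=y^{\gamma-\alpha}v$ and $\tilde u=y^{\beta-\alpha}u$. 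So boundedness of $P_\alpha^+:L^p(v\,dV_\gamma)\to L^q(u\,dV_\beta)$ is literally the same statement, with the same norm, as boundedness of $P_\alpha^+:L^p(\tilde v\,dV_\alpha)\to L^q(\tilde u\,dV_\alpha)$. That is a two-weight problem with a single parameter $\alpha$, and it is exactly the setting of \cite[Theorem 2]{Sehba6}, specialised to $\gamma=0$ there (the operator $T_{\alpha,0}=P_\alpha^+$).

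The first step is to invoke that theorem for the pair $(\tilde u,\tilde v)$ with dual weight $\tilde\sigma=\tilde v^{1-p'}$. It gives the norm equivalence with the sum of the two testing constants
\[
\sup_I|Q_I|_{\tilde\sigma,\alpha}^{-1/p}\|1_{Q_I}P_\alpha^+(1_{Q_I}\tilde\sigma)\|_{q,\tilde u,\alpha}
\quad\text{and}\quad
\sup_I|Q_I|_{\tilde u,\alpha}^{-1/q'}\|1_{Q_I}P_\alpha^+(1_{Q_I}\tilde u)\|_{p',\tilde\sigma,\alpha}.
\]
The second step is to translate each quantity back. Since $\tilde\sigma=y^{(\gamma-\alpha)(1-p')}\sigma$, we get $|Q_I|_{\tilde\sigma,\alpha}=|Q_I|_{\sigma,\alpha+(\gamma-\alpha)(1-p')}$. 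The function tested in the first condition is $1_{Q_I}y^{(\gamma-\alpha)(1-p')}\sigma$. Moreover $\|\cdot\|_{q,\tilde u,\alpha}=\|\cdot\|_{q,u,\beta}$ and $\|\cdot\|_{p',\tilde\sigma,\alpha}=\|\cdot\|_{p',\sigma,\alpha+(\gamma-\alpha)(1-p')}$. Likewise $|Q_I|_{\tilde u,\alpha}=|Q_I|_{u,\beta}$, and the function tested in the second condition is $1_{Q_I}y^{\beta-\alpha}u$. This reproduces precisely $[u,\sigma]_{S_{p,q,\alpha,\beta,\gamma}}$ and $[\sigma,u]_{q',p',\alpha,\beta,\gamma}$ as defined in the statement. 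The second term in \eqref{eq:sawyer} is just this dual testing constant under another name.

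The main point to check carefully is that the hypotheses of \cite[Theorem 2]{Sehba6} are met after the change of weights. The new weights $\tilde u,\tilde v$ must be positive and locally integrable with respect to $dV_\alpha$, and $\tilde\sigma$ must be the correct duality partner. The latter holds because the $L^2(dV_\alpha)$-duality pairs $L^p(\tilde v\,dV_\alpha)$ with $L^{p'}(\tilde\sigma\,dV_\alpha)$ and $P_\alpha^+$ is self-adjoint there. Local integrability of $y^{\gamma-\alpha}v$ near the boundary is not automatic when $\gamma<\alpha$. If the cited theorem genuinely requires it, I would first prove the equivalence for truncated weights, namely $\tilde v$ restricted to $\{y>\varepsilon\}$ together with bounded truncations. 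I would then pass to the limit by monotone convergence on both the operator norm and the testing constants, since both are monotone in the weights. If the cited theorem only needs positive a.e.-finite weights (Sawyer-type proofs via dyadic Carleson boxes typically do), this step is vacuous. Proposition \ref{prop:sawyerweak} follows from \cite[Theorem 1]{Sehba6} by the identical substitution.
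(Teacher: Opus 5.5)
Your proposal is correct and matches the paper's route: the paper only asserts that the proposition follows from \cite[Theorem 2]{Sehba6}. Your substitution $\tilde v=y^{\gamma-\alpha}v$, $\tilde u=y^{\beta-\alpha}u$, $\tilde\sigma=y^{(\gamma-\alpha)(1-p')}\sigma$ turns the problem into the one-parameter two-weight problem on $dV_\alpha$ and translates both testing constants back exactly as stated, which is the computation implicit in that citation. The truncation fallback is unnecessary, since multiplying by a power of $y$ preserves positivity and local integrability on compact subsets of $\mathbb{R}^2_+$; as written it would also need more care, because the testing constants are not monotone in $\sigma$, owing to the normalising factor $|Q_I|^{-1/p}_{\sigma,\alpha+(\gamma-\alpha)(1-p')}$.
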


A direct check shows that
$$\|P_{\alpha}^+\|_{L^{q'}(u^{1-q'}\,\mathrm{d}V_{\beta+(\alpha-\beta)q'})\longrightarrow L^{p',\infty}(v^{1-p'}\,\mathrm{d}V_{\alpha+(\gamma-\alpha)(1-p')})} \approx [\sigma,u]_{q,p,\alpha,\beta,\gamma}.$$
This allows us to deduce that
\Be\label{eq:strongfromweak}
\|P_{\alpha}^+\|_{L^p(v\,\mathrm{d}V_{\gamma})\longrightarrow L^q(u\,\mathrm{d}V_{\beta})} \approx \|P_{\alpha}^+\|_{L^p(v\,\mathrm{d}V_{\gamma})\longrightarrow L^{q,\infty}(u\,\mathrm{d}V_{\beta})}+\|P_{\alpha}^+\|_{L^{q'}(u^{1-q'}\,\mathrm{d}V_{\beta+(\alpha-\beta)q'})\longrightarrow L^{p',\infty}(v^{1-p'}\,\mathrm{d}V_{\alpha+(\gamma-\alpha)(1-p')})}.
\Ee
\vskip .3cm

We can now prove Theorem \ref{thm:sharpweakbergnew}.
\begin{proof}[Proof of Theorem \ref{thm:sharpweakbergnew}]
By \eqref{eq:sharpfracbergnew} and \eqref{eq:strongfromweak},
\Beas
 \|P_{\alpha}^+\|_{L^p(\omega^p\,\mathrm{d}V_{\gamma})\longrightarrow L^q(\omega^q\,\mathrm{d}V_{\beta})} &\approx& \|P_{\alpha}^+\|_{L^p(\omega^p\,\mathrm{d}V_{\gamma})\longrightarrow L^{q,\infty}(\omega^q\,\mathrm{d}V_{\beta})}+\|P_{\alpha}^+\|_{L^{q'}(\omega^{-q'}\,\mathrm{d}V_{\beta})\longrightarrow L^{p',\infty}(\omega^{-p'}\,\mathrm{d}V_{\gamma})}\\ &\approx& [\omega]_{B_{p,q,\gamma}}^{\frac{1}{p'}+\frac{1}{q}}+[\omega^{-1}]_{B_{q',p',\gamma}}^{\frac{1}{p'}+\frac{1}{q}}\\ &\approx& [\omega]_{B_{p,q,\alpha}}^{\left(\frac{1}{p'}+\frac{1}{q}\right)\max\{1,\frac{p'}{q}\}},
\Eeas
where we used $[\omega^{-1}]_{B_{q',p',\gamma}}=[\omega]_{B_{p,q,\alpha}}^{\frac{p'}{q}}$. This completes the proof.
\end{proof}

 \section{Conclusion}

We have completely resolved the sharp weighted theory for the positive Bergman operator \(P_\alpha^+\) in the two-parameter setting, allowing the operator exponent \(\alpha\) and the domain exponent \(\gamma\) to be genuinely distinct. The weak-type and strong-type off-diagonal estimates obtained in Theorems \ref{thm:sharpfracbergnew} and \ref{thm:sharpweakbergnew} are sharp and provide the first fully decoupled treatment of this problem. 

The central innovation is the off-diagonal extrapolation principle, Theorem \ref{thm:sharpextra}, which permits the power of the distance to the boundary to vary with the exponents, thereby removing the coupling inherent in previous fractional-operator approaches. The fact that the weight characteristic \([\omega]_{B_{p,q,\gamma}}\) depends exclusively on the domain parameter \(\gamma\), while the operator parameter \(\alpha\) appears only in the implicit constants, highlights the flexibility of our method. As expected, setting \(\alpha=\gamma\) recovers the classical one-parameter estimates \eqref{eq:strongberg}, thus placing our results as a strict and natural extension of the existing theory. We expect the techniques developed here to be useful in other contexts where a change in the ambient measure parameter is required.

\bibliographystyle{plain}

\end{document}